\spnewtheorem*{theorem*}{Theorem}{\bf}{\it}
\journalname{Structural Chemistry}
\begin{document}

\title{Finite sets of operations sufficient to construct any fullerene from $C_{20}$
\thanks{This work is supported by the Russian Science Foundation under grant 14-11-00414.}
}


\author{Victor M. Buchstaber \and
        Nikolay Yu. Erokhovets 
}


\institute{Victor M. Buchstaber\and 
           Nikolay Yu. Erokhovets \at
              Steklov Mathematical Institute of Russuian Academy of Sciences, Gubkina str. 8, 119991, Moscow, Russia\\
              Tel.: +7-495-9848141*3960\\
              Fax: +7-495-9848139\\
              \email{buchstab@mi.ras.ru, erochovetsn@hotmail.com}   
}

\date{Received: date / Accepted: date}

\maketitle

\begin{abstract}
We study the well-known problem of combinatorial classification of fullerenes. By a (mathematical) fullerene we mean a convex simple three dimensional polytope with all facets pentagons and hexagons. We analyse approaches of construction of arbitrary fullerene from the dodecahedron (a fullerene $C_{20}$). A growth operation is a combinatorial operation that substitutes the patch with more facets and the same boundary for the patch on the surface of a simple polytope to produce a new simple polytope. It is known that an infinite set of different growth operations transforming fullerenes into fullerenes is needed to construct any fullerene from the dodecahedron. We prove that if we allow a polytope to contain one exceptional facet, which is a quadrangle or a heptagon, then a finite set of growth operation is sufficient. We analyze pairs of objects: a finite set of operations, and a family of acceptable polytopes containing fullerenes such that any polytope of the family can be obtained from the dodecahedron by a sequence of operations from the corresponding set.  We describe explicitly three such pairs. First two pairs contain seven operations, and the last -- eleven operations. Each of these operations corresponds to a finite set of  growth operations and is a composition of edge- and two edges-truncations. 
\keywords{Fullerenes \and Growth operations\and Finite sets\and Polytopes\and Edge-truncations\and Dodecahedron}
\subclass{MSC 52B10 \and MSC 90C57}
\end{abstract}
\section{Preface}
\label{sec:pref}
For any mathematical fullerene the Euler's formula implies that the number of pentagons is twelve.  Nowadays the Euler formula and this consequence are  known far beyond the mathematics and the gratitude to Euler is expressed in the Nobel lecture of one of the laureates for discovery of fullerenes.  The natural question arises: What other restrictions on physical fullerenes follow from the combinatorics of mathematical counterparts. One of these restrictions is a mathematical result that the number of hexagons can not be equal to one. For centuries mathematics collected many characteristics to study the combinatorics of convex bodies: edge graph, Hamilton paths and cycles, belts of facets, regular colorings, perfect matchings (corresponding to Kekul\'e structures in chemistry), etc. Some of these notions found applications in the well-known problem: To obtain a fullerene with given number of hexagons as a result of reconstruction of a fullerene with less hexagons. In our article we present new results in this direction. 

Our study was inspired by the encouraging book E.A. Lord, A.L. Mackay, S. Ranganathan.  New Geometries for New Materials. Cambridge University Press,  2006 \cite{LMR06}. We submit this article to the Alan L Mackay special issue as a sign of our deep respect to this outstanding scientist and with the hope that our results will find applications in problems of physics and chemistry of fullerenes.

\section{Introduction}
\label{intro}
We will use notions and results of the polytope theory following \cite{G03,Z07}.
\begin{definition}
\label{def:full}
By a {\it (mathematical) fullerene} we mean a simple convex $3$-polytope with all facets pentagons and hexagons. {\it Simple} means that any vertex belongs to exactly three facets.  An {\it $IPR$-fullerene} is a fullerene without adjacent pentagons. 
\end{definition}
This is a mathematical model for a fullerene -- a carbon molecule synthesized in 1985 (Nobel prize 1996 in chemistry to Robert
Curl, Harold Kroto, and Richard Smalley \cite{C96,K96,S96}). Since discovery of fullerenes they were studied extensively from the mathematical point of view. As mentioned above the Euler formula implies that any fullerene contains exactly $p_5=12$ pentagons, while it is not hard to prove that the number $p_6$ of hexagons can be arbitrary nonnegative integer except for one. There is only one combinatorial type of fullerenes with $p_6=0$ (the dodecahedron $C_{20}$), while from the work by W. Thurston \cite{T98} it follows that for large values of $p_6$ the number of combinatorial types of fullerenes growth like $p_6^9$. There are effective algorhythms of combinatorial enumeration of fullerenes on supercomputers by G. Brinkmann and A. Dress \cite{BD97} and later to G. Brinkmann, J. Goedgebeur, and B.D. McKay \cite{BGM12}. For example, for $p_6=75$ there are 46088157 combinatorial types of fullerenes (see \cite{BCGM03}). Thus, the important problem is to define and study different structures and operations on the set of all combinatorial types of fullerenes. The well-known problem \cite{BGM12,BFFG06,HFM08,BF03,YF97,BGJ09,AKS16} is to find a simple set of operations sufficient to construct arbitrary fullerene from the dodecahedron $C_{20}$. 
\begin{definition}
\label{def:patch}
A {\it patch} is a disk bounded by a simple edge-cycle on the boundary of a simple $3$-polytope. Any patch is topologically a disk and the complement to a patch is also a disk.  A {\it growth operation} is a combinatorial operation that gives a new $3$-polytope $Q$ from a simple $3$-polytope $P$ by substituting  a new patch with the same boundary and more facets for the patch on the boundary of $P$.   
\end{definition}

The Endo-Kroto operation  \cite{EK92} (Fig. \ref{fig:EK}) is the simplest example of a growth operation that transforms a fullerene into a fullerene. It increases $p_6$ by one.
\begin{figure}
\includegraphics[width=0.4\textwidth]{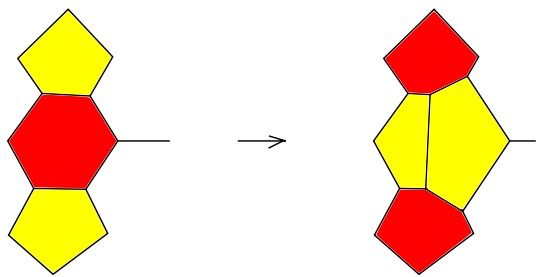}
\caption{Endo-Kroto operation}
\label{fig:EK}
\end{figure}
It was proved in \cite{BGJ09} that there is no finite set of growth operations transforming fullerenes into fullerenes sufficient to construct arbitrary fullerene from the dodecahedron. In \cite{HFM08} the example of an infinite set was found. 

We construct finite sets of growth operations and corresponding families of simple $3$-polytopes extending the set of fullerenes such that any polytope of the family can be combinatorially constructed from the dodecahedron by a sequence of these operations and all intermediate polytopes belong to the family. The less polytopes the extended family contains the more operations we need. All of the operations we construct are composition of edge- and two edges-truncations, see Definition \ref{def:sk}. 

\section{Main tools}
\label{sec:tools}

\subsection{$(s,k)$-truncations}
\label{subsec:sk-tr}

\begin{definition}
\label{def:sk}

\begin{figure}
\includegraphics[width=0.4\textwidth]{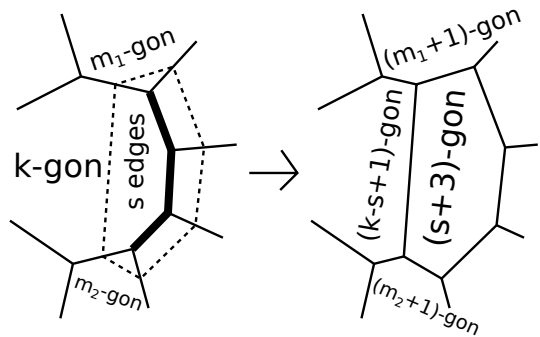}
\caption{$(s,k)$-truncation}
\label{fig:sk}
\end{figure}

Let $F$ be a {\it $k$-gonal facet} of a simple $3$-polytope. Choose {\it $s$ consequent edges} of $F$,
rotate the supporting hyperplane of $F$ around the axis passing through the midpoints of adjacent two edges (one on each side), and take the corresponding hyperplane truncation (see Fig. \ref{fig:sk}).

We call this operation an {\it $(s,k)$-truncation}. If the facets intersecting $F$ by the edges adjacent to truncated sequence of edges are $m_1$- and $m_2$-gons, then we also call the operation an {\it $(s,k;m_1,m_2)$-truncation}. The reflexion of the space $\mathbb R^3$ changes $m_1$- and $m_2$-gons, so we will not distinguish between $(s,k;m_1,m_2)$- and $(s,k;m_2,m_1)$-truncations. For $s=1$ the truncated edge belongs to two facets, so we denote these operation simply as $(1;m_1,m_2)$-truncation. 
\end{definition}

\begin{example}
\label{ex:tr}
\begin{enumerate}
\item A vertex-truncation is a $(0,k)$-truncation.
\item An edge-truncation is a $(1, k)$-truncation.
\item The Endo-Kroto operation is a $(2, 6)$-truncation.
\item $(2,6)$-truncation is the only $(s,k)$-truncation that transforms a fullerene to a fullerene.
\end{enumerate}
\end{example}

\begin{remark}
\label{rem:tr}
For $s=0$ and $s=k-2$ we have the vertex-truncation, which can be considered as the substitution of the corresponding patch for the three facets containing the vertex; hence it is a growth operation. Let $0<s<k-2$ and the $m_1$-gon and the $m_2$-gon do not intersect. Then the union of a $k$-gon and these facets form a patch, and after the $(s,k)$-truncation we have the new patch with combinatorially the same boundary (see Fig. \ref{fig:grtr} on the left). Thus we have a growth operation, which we denote by the scheme on Fig. \ref{fig:grtr} on the right.

\begin{figure}
\includegraphics[width=0.4\textwidth]{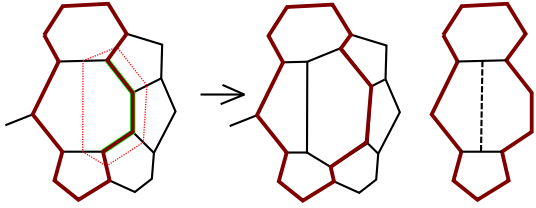}
\caption{$(s,k)$-truncation as a growth operation}
\label{fig:grtr}
\end{figure}

For $s=1$ as mentioned above the edge-truncation can be considered as a $(1,k)$-truncation and a $(1,k')$-truncation for two facets containing the truncated edge. This gives two different patches, which differ by one facet. 
\end{remark}

\begin{theorem}[see \cite{E1891,B1900}] 
\label{3ptheorem}
A  $3$-polytope is simple if and only if it is combinatorially equivalent to a polytope 
obtained from the tetrahedron by a sequence of vertex-, edge- and
$(2,k)$-truncations.
\end{theorem}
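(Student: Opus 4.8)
The plan is to prove the two implications separately; the forward one is routine, and the converse goes by reverse induction on the number of facets. For the ``if'' direction it suffices to note that the tetrahedron is simple and that a vertex-, edge-, or $(2,k)$-truncation applied to a simple $3$-polytope $P$ again yields a simple polytope. Indeed, such a truncation is by definition the intersection $P\cap H^-$ of $P$ with a closed halfspace whose bounding hyperplane $\partial H$ meets $\partial P$ transversally, crossing only edges of $P$ at their interior points (the axis of rotation passes through midpoints of edges). Hence each vertex of $P\cap H^-$ is either an old vertex of $P$ in the open halfspace — lying in exactly the three facets of $P$ through it — or a new point $\partial H\cap e$ on an edge $e$, lying in the two facets containing $e$ together with the new facet $P\cap\partial H$; in both cases it is $3$-valent, so $P\cap H^-$ is simple.

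For the converse I would induct on the number $f$ of facets of a simple $3$-polytope $P$. If $f=4$ then $P$ is combinatorially the tetrahedron. If $f\geq5$, then, writing $p_k$ for the number of $k$-gonal facets, Euler's formula gives $\sum_{k\geq3}(6-k)p_k=12$, so $P$ has a facet $F$ with $|F|\leq5$. The goal is to exhibit a simple $3$-polytope $P'$ with fewer facets from which $P$ is obtained by one of the three truncations; the induction hypothesis applied to $P'$ then finishes the proof. If $F$ is a triangle, I would contract it to a single $3$-valent vertex (inverse vertex-truncation), after checking that no neighbour of $F$ is a triangle — a short argument shows that a simple polytope with two adjacent triangular facets is the tetrahedron, which is excluded here. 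If the minimal facet size is $4$, with $F$ a quadrilateral whose cyclic neighbours are $A,B,C,D$, I would collapse $F$ onto a new edge joining one pair of opposite neighbours (inverse edge-truncation); this is admissible as soon as that pair is non-adjacent in $P$, the other two neighbours then losing one edge but staying genuine facets since they have at least four edges. If the minimal facet size is $5$, I would collapse a pentagonal facet $N$ to a single $3$-valent vertex (inverse $(2,k)$-truncation), which is possible once $N$ has a neighbour $X$ not adjacent in $P$ to either of the two neighbours of $N$ lying ``opposite'' to $X$ along $\partial N$.

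The main obstacle is to guarantee that at least one such reduction is always available when $P$ is not the tetrahedron, i.e.\ to rule out ``stuck'' polytopes in which every facet of size $\leq5$ violates the relevant non-adjacency condition. I expect this to require a careful combinatorial case analysis, most cleanly carried out in the dual: $P^*$ is then a simplicial $3$-polytope, facets of $P$ of size $3,4,5$ become vertices of $P^*$ of degree $3,4,5$, and the three inverse truncations become the deletion of such a vertex followed by a re-triangulation of its (triangular, square, or pentagonal) link — the obstruction being precisely that the re-triangulation would create an already existing edge. A secondary point is to confirm that each contracted complex is genuinely a polytope: by Steinitz's theorem this reduces to checking that the contraction keeps the $1$-skeleton $3$-connected (planarity and $3$-regularity being immediate), equivalently that no doubled edge appears in the dual triangulation.
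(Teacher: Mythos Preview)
The paper does not prove Theorem~\ref{3ptheorem}: it is quoted as a classical result of Eberhard and Br\"uckner and used as background. So there is no ``paper's own proof'' to compare against, and your outline is in fact the standard argument.

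Your strategy is correct, and the obstacle you flag is smaller than you think. Work in the dual simplicial sphere $P^*$. If $v$ is a vertex of degree $d\in\{3,4,5\}$ with link the $d$-cycle $w_1\cdots w_d$, then the induced subgraph on $\{v,w_1,\dots,w_d\}$ has at least $2d$ edges (the $d$ spokes and the $d$ link edges); since $P^*$ is planar this subgraph has at most $3(d+1)-6=3d-3$ edges, so at most $d-3$ diagonals of the link-cycle can be present. For $d=4$ this gives at most one diagonal, so at least one of the two collapse directions for the quadrilateral is available. For $d=5$ it gives at most two diagonals, and since each diagonal obstructs exactly two of the five possible fans, at least one fan (your choice of $X$) survives. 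The $d=3$ case you already handled: two adjacent degree-$3$ vertices force the tetrahedron. That is the entire ``careful case analysis''; no exotic stuck polytopes exist. The same edge count, together with the minimum-degree hypothesis in each case, also excludes repeated faces after the retriangulation, and a simplicial $2$-sphere on $\ge 4$ vertices always has $3$-connected $1$-skeleton, so the Steinitz step is automatic.

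One small slip: the inverse $(2,k)$-truncation does not collapse the pentagon $N$ to a single vertex but to a path of two edges on the boundary of the neighbour $X$ (which thereby gains an edge, while the two neighbours flanking $X$ each lose one). Your admissibility condition --- that $X$ is not adjacent to either of the two neighbours of $N$ opposite $X$ --- is nonetheless exactly the right one, so this is only a wording issue.
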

This result was used in works of famous crystallographer E.S. Fedorov.
\subsection{$k$-belts and flag polytopes}
\label{subsec:kbf}
We will denote the combinatorial equivalence of polytopes $P$ and $Q$ by $P\simeq Q$. The polytope combinatorially equivalent to the tetrahedron we call simplex and denote $\Delta^3$. 
\begin{definition}
\label{def:belt}
A cyclic sequence of $k$ facets $(F_{i_1},\dots,F_{i_k})$,  $F_{i_{k+1}}=F_{i_1}$, is called a {\it $k$-belt} if $F_{i_1}\cap\dots\cap F_{i_k}=\varnothing$, and $F_{i_p}\cap F_{i_q}\ne\varnothing$ if and only if $p-q=\pm 1\;{\rm mod}\; k$. 

A simple $3$-polytope is called {\it flag}, if it is not a simplex and has no $3$-belts. Equivalently, any set of pairwise adjacent facets has nonempty intersection.  
\end{definition}

It is easy to prove that if $P$ is a flag $3$-polytope, then the $(s,k)$-truncation transforms it to a flag  polytope if and only if $0<s<k-2$.

For the structure of $k$-belts on simple $3$-polytopes with at most hexagonal facets see \cite{E15}. Related results on cyclic $k$-edge cuts see in \cite{D98,D03,KS08,KM07,KKLS10}. 

In \cite{BE15a} the analog of Theorem \ref{3ptheorem} for flag polytopes was proved.
\begin{theorem}
\label{th:flag}
A simple $3$-polytope is flag if and only if it is combinatorially
equivalent to a polytope obtained from the cube by a sequence of edge- and $(2,k)$-truncations, $k\ge 6$.
\end{theorem}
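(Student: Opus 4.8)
\medskip\noindent\emph{Proof proposal.} I would prove the two implications separately, the reverse one by induction on the number of facets.

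\noindent\emph{``If''.} It is enough to show that the cube is flag and that the allowed operations preserve flagness. The cube is not a simplex, has no triangular facet, and any three of its pairwise adjacent facets meet in a common vertex, so it has no $3$-belt; hence it is flag. By the criterion recalled just before the statement, an $(s,k)$-truncation of a flag polytope is flag precisely when $0<s<k-2$. A useful preliminary observation is that a flag $3$-polytope has no triangular facet: if $F$ were a triangle with neighbours $F_1,F_2,F_3$, then either $F_1\cap F_2\cap F_3=\varnothing$, so $(F_1,F_2,F_3)$ is a $3$-belt, or $F_1\cap F_2\cap F_3$ is a vertex, and comparing it with the three vertices of $F$ forces $F,F_1,F_2,F_3$ to be triangles on four common vertices, i.e. $P\simeq\Delta^3$. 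Hence every facet of a flag polytope has at least four edges, so an edge-truncation is a $(1,k)$-truncation with $k\ge4$ and preserves flagness, and a $(2,k)$-truncation with $k\ge6$ satisfies $0<2<k-2$ and also preserves flagness. Therefore everything obtained from the cube by edge- and $(2,k\ge6)$-truncations is flag.

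\noindent\emph{``Only if''.} Let $P$ be flag with $F$ facets, $E$ edges, $V$ vertices. From $2E=\sum_i p_i$, $3V=2E$, $V-E+F=2$ and $p_i\ge4$ one gets $F\ge6$, with equality only if all facets are quadrilaterals, i.e. only if $P$ is the cube. So the cube is the base of the induction, and for $F\ge7$ it suffices to perform an inverse edge-truncation, or an inverse $(2,k)$-truncation, producing a flag polytope with fewer facets; note that when $P$ has no quadrilateral every facet has $\ge5$ edges, so the $(2,k)$-truncation obtained automatically has $k\ge6$ (this is why $(2,5)$-truncations, though flag-preserving, are never needed). If $P$ has a quadrilateral facet $Q$, with $A,B$ the facets across one pair of opposite edges and $C,D$ those across the other, then flagness forces $A\not\sim B$ and $C\not\sim D$: indeed $A,B,Q$ are pairwise adjacent while $A\cap B\cap Q\subseteq(A\cap Q)\cap(B\cap Q)=\varnothing$ since two opposite edges of $Q$ are disjoint, so $A\sim B$ would make $(A,B,Q)$ a $3$-belt. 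Hence the inverse edge-truncation contracting $Q$ along either pair of opposite edges yields a genuine simple polytope, and one checks that the contraction along $\{A,B\}$ fails to be flag only if some facet $X\notin\{A,B,C,D\}$ is adjacent to both $A$ and $B$ — equivalently only if $(X,A,Q,B)$ is a $4$-belt — and symmetrically for $\{C,D\}$. If instead $P$ has no quadrilateral, I would pick a pentagon $G$ with a distinguished neighbour and label its five neighbours cyclically $(F,H_1,G_1,G_2,H_2)$ with $F\not\sim G_1$ and $F\not\sim G_2$; the inverse $(2,|F|+1)$-truncation then deletes $G$, replaces the three consecutive edges of $F$ lying on $H_1,G,H_2$ by four consecutive edges lying on $H_1,G_1,G_2,H_2$, and shortens $H_1,H_2$ by one edge each, giving a simple polytope in which $F$ has $|F|+1\ge6$ edges; this is flag unless, again, an exceptional common neighbour of $F$ and $G_1$ (or of $F$ and $G_2$) beyond the ones forced by the local picture is present.

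\noindent\emph{Main obstacle.} The crux is to show that for \emph{every} flag $P\not\simeq$ cube a suitable $Q$ (resp. a suitable pair $G,F$) exists for which the corresponding inverse truncation \emph{both} is well defined \emph{and} preserves flagness, i.e. to exclude the exceptional common neighbours / $4$-belts above. This is a finite but delicate analysis of the links of the small facets of flag $3$-polytopes and of the $4$-belts passing through them, and it is exactly here that ``no $3$-belt'' and ``$k\ge6$'' are used essentially; once it is settled, each reduction strictly decreases the number of facets and the induction closes. I expect it may be cleaner to argue dually: a simple $3$-polytope is flag iff its boundary is a flag triangulation of $S^2$, an edge-truncation corresponds to subdividing an edge, and a $(2,k)$-truncation corresponds to inserting a new vertex joined to all five corners of a pentagonal region triangulated as a fan of three triangles; in this language the claim reads ``every flag triangulation of $S^2$ is obtained from the boundary of the octahedron by these two moves'', which should make the case analysis more transparent.
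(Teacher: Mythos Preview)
The paper does not actually prove this theorem: it is quoted from \cite{BE15a} (Buchstaber--Erokhovets, \emph{Proc.\ Steklov Inst.\ Math.}\ 289 (2015), 104--133), and the present text contains no argument for it beyond the one-line remark that an $(s,k)$-truncation of a flag polytope is again flag iff $0<s<k-2$. So there is no ``paper's own proof'' to compare against here; what I can do is assess your outline.

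Your ``if'' direction is complete and correct: the cube is flag, flag polytopes have no triangular facets (your argument for this is fine), hence every edge-truncation is a $(1,k)$-truncation with $k\ge 4$ and every $(2,k)$-truncation with $k\ge 6$ satisfies $0<s<k-2$, so both preserve flagness by the criterion the paper states just before the theorem.

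Your ``only if'' direction, however, is not a proof but a plan, and you say so yourself. The inductive scheme (find a quadrilateral or a pentagon, perform the corresponding inverse truncation, stay flag, repeat) is exactly the natural approach and is, as far as one can tell from \cite{BE15a}, the approach taken there as well. But the step you label ``Main obstacle'' is \emph{the} content of the theorem. Concretely: for a quadrilateral $Q$ with opposite neighbours $\{A,B\}$ and $\{C,D\}$ you correctly note that flagness forces $A\cap B=\varnothing=C\cap D$, so both straightenings are defined; but you do not show that at least one of them yields a flag polytope, i.e.\ that there is no facet $X$ with $(X,A,Q,B)$ a $4$-belt \emph{and} no facet $Y$ with $(Y,C,Q,D)$ a $4$-belt simultaneously. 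Likewise, in the pentagon case you have to exhibit a pentagon $G$ and a neighbour $F$ for which no bad extra adjacency occurs, and you have not argued that such a pair always exists; you also silently use that a flag polytope with $p_4=0$ has $p_5>0$, which is true by $\sum_k(6-k)p_k=12$ but should be said. Until this existence/avoidance analysis is carried out, the induction does not close. This case analysis is genuinely delicate (it is the substance of the cited paper), so as written your argument has a real gap at its central point rather than a routine omission.
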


For us this result is important, since any fullerene is a flag polytope. This follows from \cite{D98}. We will also need a more general result.
\begin{theorem}[\cite{BE15b,BE16}]
\label{th:3belts}
Let $P$ be simple $3$-polytope with $p_3=0$, $p_4\le 2$, $p_7\le
1$, and $p_k=0$, $k\ge 8$. Then it has no $3$-belts. In particular, it
is a flag polytope.
\end{theorem}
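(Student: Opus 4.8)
The plan is to argue by contradiction: assume $P$ has a $3$-belt and derive a contradiction; since $p_3=0$ forbids $P\simeq\Delta^3$, this also gives flagness. A $3$-belt $(F_1,F_2,F_3)$ has $F_1\cap F_2\cap F_3=\varnothing$, so the three edges $F_1\cap F_2$, $F_2\cap F_3$, $F_3\cap F_1$ are pairwise non-adjacent and their union cuts $\partial P\cong S^2$ into two disks, each a nonempty union of facets. Among all $3$-belts of $P$ I fix one for which the side $D$ with fewer facets has the least possible number $m(D)$ of facets, and write $D'$ for the other side.

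The main bookkeeping tool is an Euler-type identity for a disk $D$ which is a union of facets: if $b_2$, resp. $b_3$, counts the boundary vertices having exactly $2$, resp. $3$, of their edges in $D$, then $\sum_{f\subset D}(6-|f|)=6-b_2+b_3$; this follows by combining $V-E+F=1$, the face--edge incidence count, and the fact that every interior vertex and every $b_3$-vertex is $3$-valent in $D$. For a disk cut off by a $3$-belt the only $2$-valent boundary vertices are the three points where the edges $F_i\cap F_{i+1}$ meet $D$, so $b_2=3$ and the identity becomes $\sum_{f\subset D}(6-|f|)=e(D)$, where $e(D)$ is the number of boundary edges of $D$. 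Using $p_3=0$ and $p_k=0$ for $k\ge 8$ this reads $2p_4(D)+p_5(D)-p_7(D)=e(D)$, and likewise for $D'$; moreover $p_4(D)+p_4(D')\le 2$, $p_7(D)+p_7(D')\le 1$, and $e(D)+e(D')=|F_1|+|F_2|+|F_3|-6\le 15$.

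Next I study the facets of $D$ along $\partial D$. Write $a_i=\partial F_i\cap D$, the arc of $\partial D$ formed by the edges $F_i$ shares with $D$ (an arc of $\ge 1$ edge: it cannot degenerate to a point, since that would place a vertex in $F_1\cap F_2\cap F_3$). The facet of $D$ occupying the corner at an endpoint $u$ of $a_i$ contains the edge of $a_i$ at $u$, so the two corner facets at the two ends of $a_i$ coincide whenever $|a_i|=1$. If two of the arcs had length $1$, a single facet of $D$ would share two edges with some $F_j$ (impossible in a simple polytope) or would equal $D$ (a triangle, contradicting $p_3=0$); hence at most one arc has length $1$, and a short argument with the identity then excludes $m(D)\le 2$ and $e(D)\le 5$, leaving $e(D)\ge 6$ and symmetrically $e(D')\ge 6$. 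Finally one picks three pairwise adjacent facets among the boundary facets of $D$ — the corner facets together with the facets carrying the interior edges of the longer arcs — and applies the standard dichotomy: three pairwise adjacent facets of a simple $3$-polytope either have a common vertex or form a $3$-belt. If they form a $3$-belt, it bounds a disk strictly inside $D$ with at most $m(D)-3$ facets, contradicting minimality of $D$. If they share a vertex $v$, tracing the three boundaries through $v$ pins down their sizes: for the pattern $(|a_1|,|a_2|,|a_3|)=(2,2,2)$ all three are $4$-gons, so $p_4\ge 3$; for the other admissible patterns one of the three is forced to be a triangle, so $p_3\ge 1$ — each contradicting the hypotheses.

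What keeps the last step finite is $p_7\le 1$: an arc of length $\ge 4$ forces its belt facet to be a heptagon whose opposite arc has length $1$, so at most one of the six arcs $a_1,a_2,a_3$ (of $D$) and $a_1',a_2',a_3'$ (of $D'$) has length $4$; together with $6\le e(D),e(D')$ and $e(D)+e(D')\le 15$ this confines the analysis to a short list of patterns such as $(2,2,2)$, $(1,2,3)$, $(2,2,3)$, $\dots$. I expect this case analysis to be the hard part: for each pattern one must say exactly which boundary edges of $D$ lie in which facet, track the forced coincidences of corner facets, choose the correct pairwise adjacent triple, and check that ``common vertex'' really forces three $4$-gons or a triangle; the patterns $(2,2,2)$ and $(1,2,3)$ already display the mechanism, and $p_7\le 1$ is precisely what removes the remaining long-arc patterns without further work.
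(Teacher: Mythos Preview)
The paper does not actually prove Theorem~\ref{th:3belts}; it is quoted from \cite{BE15b,BE16} and used as a black box, so there is no in-paper argument to compare your proposal against.

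Taken on its own, your plan is the natural one --- minimal $3$-belt, disk Euler identity, then force a smaller $3$-belt or forbidden small faces --- and your bookkeeping is essentially right (the bound $e(D)+e(D')\le 15$ should be $\le 13$, since at most one belt facet is a heptagon, but this is harmless). The gap is in the final step. You assert that among the boundary facets of $D$ one can always choose three that are \emph{pairwise} adjacent, but for the arc pattern $(2,2,3)$ --- which does occur, with belt $(6,6,7)$, $e(D)=7$, $e(D')=6$ --- the four boundary facets $G_{12},G_{23},H,G_{31}$ (three corner facets plus the middle facet of the length-$3$ arc) have adjacency graph a $4$-cycle: $G_{12}$--$G_{23}$, $G_{23}$--$H$, $H$--$G_{31}$, $G_{31}$--$G_{12}$, with neither diagonal forced. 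The same obstruction appears for $(1,3,3)$. So the dichotomy ``common vertex or smaller $3$-belt'' is not available from your chosen triple, and you cannot yet close these cases.

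One way to repair this: note that in all the problematic patterns the heptagon sits in the belt, hence $p_7(D)=p_7(D')=0$, and the \emph{other} side $D'$ always has arc pattern $(2,2,2)$ or $(1,2,3)$, where your argument does produce either three quadrangles, a triangle, or a new $3$-belt; then argue separately that a $3$-belt found inside $D'$ still contradicts minimality (this needs care, since the inner disk has $m(D')-3$ facets, which need not be $<m(D)$), or iterate. Alternatively, handle $(2,2,3)$ directly by showing the $4$-cycle $G_{12},G_{23},H,G_{31}$ is a $4$-loop whose diagonals cannot both be empty without creating a forbidden face. Either way, the case analysis you flag as ``the hard part'' really does need an extra idea beyond picking a pairwise-adjacent triple.
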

Also we will need the following results. In the case of fullerenes it follows from \cite{D03}.
\begin{theorem}[\cite{BE15b,BE16}]\label{4belts-theorem}
Let $P$ be a simple polytope with all facets pentagons and hexagons with at
most one exceptional facet $F$ being a quadrangle or a heptagon.
\begin{enumerate}
\item If $P$ has no quadrangles, then $P$ has no $4$-belts.
\item If $P$ has a quadrangle $F$, then there is exactly one $4$-belt. It
    surrounds $F$.
\end{enumerate}
\end{theorem}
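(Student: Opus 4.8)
The plan is to argue by a local (Euler-type) counting analysis of how a putative $4$-belt $(F_{i_1},F_{i_2},F_{i_3},F_{i_4})$ can sit inside $P$. By Theorem \ref{th:3belts} the polytope $P$ is flag, so every set of pairwise adjacent facets has nonempty intersection; in particular the four consecutive intersections $F_{i_1}\cap F_{i_2}$, $F_{i_2}\cap F_{i_3}$, $F_{i_3}\cap F_{i_4}$, $F_{i_4}\cap F_{i_1}$ are edges, and the non-consecutive pairs are disjoint. A $4$-belt separates the sphere $\partial P$ into two disks $D'$ and $D''$, each bounded by the same $4$-gonal curve made of these four edges; the four edges meet in four vertices $v_1,\dots,v_4$ lying on the belt, and at each $v_j$ exactly one further edge leaves the belt into $D'$ and one into $D''$ (here the simplicity of $P$ is used). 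I would set up the quantities $p_k'$, $p_k''$ = number of $k$-gons strictly inside $D'$, resp. $D''$, together with the numbers of vertices and edges inside each disk, and apply the Euler relation to each disk separately.

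The key computation is the standard weighted count $\sum_k (6-k)p_k = 12$ for the whole fullerene-like polytope (valid since $p_3=p_k=0$ for $k\ge 8$ and the single exceptional face contributes $\pm 1$), refined to each disk. Writing the Euler relation for the closed disk $\bar D'$ and using that every interior vertex is $3$-valent while boundary vertices have valence $3$ with exactly two belt-edges, one gets a formula of the shape $\sum_k (6-k)p_k' = 6 - (\text{boundary contribution})$; since the belt has length $4$ the boundary contribution is the same constant on both sides, and adding the two disk-formulas must reproduce $\sum_k(6-k)p_k = 12$ minus the belt's own contribution. Carrying this out shows that each disk has a total ``pentagon-excess'' that is tightly constrained: a disk bounded by a $4$-belt and containing no quadrangle of $P$ is forced to contain at least six pentagons, so both disks would need six — but one of them could then contain no room for the missing structure unless the belt actually surrounds a quadrangle. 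Making this dichotomy precise is exactly the split in the statement: if $P$ has no quadrangle, no disk can ``absorb'' a $4$-belt and we reach a contradiction, giving part (1); if $P$ has the quadrangle $F$, then the $4$-belt consisting of the four facets adjacent to $F$ is a genuine $4$-belt, and the counting shows any $4$-belt must have one side containing nothing but $F$ itself, forcing uniqueness, giving part (2).

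For the existence and uniqueness in part (2) I would argue as follows. Existence: when $F$ is a quadrangle, its four neighbours $F_{j_1},\dots,F_{j_4}$ in cyclic order are pairwise non-adjacent except consecutively (otherwise one produces a $3$-belt, contradicting flagness via Theorem \ref{th:3belts}), and $F$ itself witnesses that their total intersection is nonempty only in the non-belt sense, so they form a $4$-belt surrounding $F$. Uniqueness: given any $4$-belt, the disk on its ``small'' side has, by the refined Euler count, weighted pentagon-total forcing it to contain exactly the single quadrangle and nothing else, so the belt is the link of that quadrangle; since there is only one quadrangle, there is only one such belt.

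The main obstacle I expect is bookkeeping the boundary terms in the Euler relation correctly — in particular handling the belt vertices (which are shared by both disks and are $3$-valent with a prescribed local picture) so that the two disk-wise identities add up cleanly to the global $\sum_k(6-k)p_k=12$, and making sure the single exceptional heptagon or quadrangle is charged to the correct disk. Once that ledger is set up, the inequality ``a $4$-belt needs six pentagons on each side unless it surrounds a quadrangle'' should drop out, and with only twelve pentagons available this immediately yields both the non-existence in case (1) and the uniqueness in case (2). One should also separately dispose of the heptagon case inside this same framework: the heptagon contributes $-1$ to the weighted sum, which only tightens the contradiction in part (1) and does not create a second $4$-belt in part (2).
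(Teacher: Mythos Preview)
The paper does not actually prove Theorem~\ref{4belts-theorem}; it is quoted from \cite{BE15b,BE16} (the fullerene case going back to \cite{D03}). So there is no in-paper proof to compare against, and I evaluate your proposal on its own.

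Your geometric set-up is incorrect, and this breaks the counting argument. The four belt edges $e_j=F_{i_j}\cap F_{i_{j+1}}$ do \emph{not} form a closed $4$-gon: consecutive edges $e_{j-1}$ and $e_j$ would share a vertex only if $F_{i_{j-1}}\cap F_{i_j}\cap F_{i_{j+1}}\ne\varnothing$, which forces $F_{i_{j-1}}\cap F_{i_{j+1}}\ne\varnothing$, contradicting the definition of a $4$-belt. The union of the four belt facets is an annulus; its complement is two disks $D',D''$ whose boundary cycles $\gamma',\gamma''$ are made of the non-belt edges of the $F_{i_j}$, with the four belt edges as rungs connecting $\gamma'$ to $\gamma''$. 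Each $F_{i_j}$ contributes $a_j\ge 1$ edges to $\gamma'$ and $k_j-2-a_j\ge 1$ to $\gamma''$, so $b':=|\gamma'|\ge 4$, $b'':=|\gamma''|\ge 4$, and $b'+b''=\sum k_j-8$.

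Carrying out the disk-wise Euler count correctly (with exactly four degree-$2$ boundary vertices on each side, namely the belt-edge endpoints) yields
\[
\sum_k (6-k)\,p_k'\;=\;b'-2,\qquad \sum_k (6-k)\,p_k''\;=\;b''-2,
\]
which together with the belt contribution does add back to $12$. But if $D'$ contains only pentagons and hexagons this gives merely $p_5'=b'-2\ge 2$, not $\ge 6$ as you claim. With, say, four hexagonal belt facets one gets $b'+b''=16$ and $p_5'+p_5''=12$, perfectly consistent with a $4$-belt and no contradiction whatsoever. Thus the Euler ledger alone cannot rule out $4$-belts in a fullerene; your key inequality is false and the dichotomy you want does not drop out. (One true consequence of the count is that $b'=4$ forces $D'$ to be a single quadrangle, which handles existence in part~(2), but not non-existence/uniqueness.)

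The proofs in the cited references proceed by a direct combinatorial analysis of the facets bordering a hypothetical $4$-belt---using flagness (no $3$-belts) to control how neighbours of the belt can meet, and then pushing the configuration to either a quadrangle on one side or a contradiction---rather than by a global curvature count. If you want to salvage a counting approach you would need an additional structural ingredient beyond Euler (for example, an inductive reduction that produces a shorter belt or a smaller side), because the numerics by themselves are compatible with $4$-belts.
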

Our main Theorems \ref{th:fultr}, \ref{th:trw4} and \ref{th:iptr} imply the following result.
\begin{theorem} \label{th:conf}
Any fullerene can be obtained from the dodecahedron by a sequence of $p_6$ edge- and only $(2,6)$- and $(2,7)$-truncations.
\end{theorem}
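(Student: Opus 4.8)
The plan is to deduce Theorem~\ref{th:conf} from the three main theorems \ref{th:fultr}, \ref{th:trw4} and \ref{th:iptr} by expanding each of their composite operations into the elementary truncations out of which it is built, and then recovering the length of the resulting sequence from a facet count. So, given a fullerene $P$ with $p_6$ hexagons, I would first apply (the appropriate one of) these theorems: since in each of them the acceptable family contains all fullerenes --- and in particular $P$ together with every fullerene met on the way --- it yields a sequence of the seven (resp.\ seven, resp.\ eleven) operations of the corresponding set that carries the dodecahedron $C_{20}$ to $P$ through acceptable polytopes. For the conclusion of Theorem~\ref{th:conf} it does not matter which of the three is invoked, nor that the intermediate polytopes are acceptable; only the internal structure of the operations is used.

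The crucial point is that each operation occurring in the three sets is, \emph{by its very definition}, a composition of edge-truncations and $(2,k)$-truncations, and --- as one checks directly on the explicit list of operations --- the only values of $k$ that ever appear in a $(2,k)$-step are $k=6$ and $k=7$: none of these operations uses a vertex-truncation, an $(s,k)$-truncation with $s\ge 3$, or a $(2,k)$-truncation with $k\in\{4,5\}$ or $k\ge 8$. Substituting, in the sequence produced in the first step, each operation by the string of elementary truncations defining it, we obtain a sequence of edge-, $(2,6)$- and $(2,7)$-truncations leading from $C_{20}$ to $P$. (The polytopes that arise strictly between two consecutive elementary truncations inside one composite operation need not belong to the acceptable family, but Theorem~\ref{th:conf} makes no assertion about them.)

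It remains to count. By Definition~\ref{def:sk} an $(s,k)$-truncation cuts the polytope by a single new supporting hyperplane, hence adds exactly one facet; so a sequence of $N$ elementary truncations changes the number of facets by exactly $N$. The dodecahedron has $12$ facets, while a fullerene with $p_6$ hexagons has $12+p_6$ facets, because it has $p_5=12$ pentagons by Euler's formula. Therefore $N=p_6$, which is the assertion of the theorem. I expect the real work --- and the only genuine obstacle --- to be concentrated in the middle step: one has to trace how the facet-degrees evolve \emph{inside} each composite operation in order to be sure that every $(2,k)$-step is performed on a hexagon or a heptagon, and never on a pentagon, on the exceptional quadrangle, or on a facet with at least eight sides. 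This is a finite but delicate piece of bookkeeping over all the operations of the three main theorems, and it is exactly the constraint under which those operations had to be designed.
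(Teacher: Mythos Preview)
Your argument is correct and is exactly the derivation the paper has in mind: Theorem~\ref{th:conf} is stated in the paper as an immediate consequence of Theorems~\ref{th:fultr}, \ref{th:trw4} and \ref{th:iptr}, and your expansion into elementary truncations together with the facet count is the intended filling-in.

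One remark on emphasis: the ``genuine obstacle'' you anticipate in the middle step is not there. The decomposition of each composite operation into elementary truncations, \emph{including the value of $k$}, is part of the \emph{statements} of Theorems~\ref{th:trw4} and~\ref{th:iptr} (items 4--6 in each), so no bookkeeping on facet-degrees inside the operations is required of you. In fact the shortest route is to use Theorem~\ref{th:fultr} alone: its seven operations are already single edge-, $(2,6)$- or $(2,7)$-truncations (just read the list), and for a fullerene the count $p_6+2p_7-p_4$ in its statement equals $p_6$, so even your facet-counting step is superfluous.
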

\subsection{Straightening along an edge}
\label{subsec:sag}
It is important that in some cases there is an operation inverse to the $(s,k)$-truncation.
\begin{definition}
\label{def:str}
For some edges $F_p\cap F_q$  of a simple $3$-polytope $P$ there is an operation of {\it straightening along the edge} (see Fig. \ref{fig:straight}).
\end{definition}
\begin{figure}[h]
\includegraphics[width=0.4\textwidth]{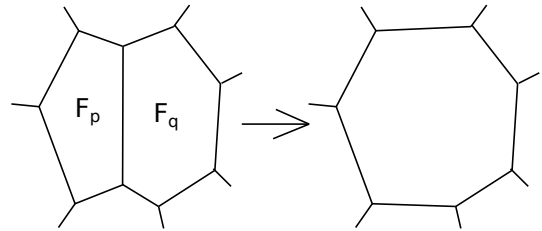}
\caption{Straightening along the edge}
\label{fig:straight}
\end{figure}
We will need the following result.
\begin{lemma}[\cite{BE15a,BE16}]
\label{lem:str}
For $P\simeq\Delta^3$ no straightening operations are defined. Let
$P\not\simeq \Delta^3$ be a simple polytope. For an edge $F_i\cap F_j$ the operation of straightening is
defined if and only if there is no $3$-belt
$(F_i,F_j,F_k)$ for some $F_k$.
\end{lemma}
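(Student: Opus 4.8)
The plan is to read the straightening along the edge $e=F_i\cap F_j$ as the combinatorial \emph{contraction} of $e$: the facets $F_i$ and $F_j$ are straightened into a single facet $\widetilde F$, the edge $e$ together with its endpoints $v_1=F_i\cap F_j\cap F_a$ and $v_2=F_i\cap F_j\cap F_b$ is deleted, and at each $v_t$ the two surviving edges (namely $F_i\cap F_a,\,F_j\cap F_a$ at $v_1$ and $F_i\cap F_b,\,F_j\cap F_b$ at $v_2$) are merged into one edge of $\widetilde F$; this is a reduction, lowering the number of facets by one. Dually, on the nerve $K$ of the facets (a triangulation of $S^2$ whose vertices are the facets of $P$, edges the edges of $P$, and triangles the vertices of $P$), this is exactly the contraction of the edge $\{i,j\}$ of $K$, and a $3$-belt $(F_i,F_j,F_k)$ corresponds precisely to an \emph{empty triangle} on $\{i,j\}$, i.e. a vertex $k$ adjacent to both $i$ and $j$ with $\{i,j,k\}$ not a face. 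Thus the statement is the classical criterion that an edge of a triangulated $2$-sphere is contractible if and only if it lies in no empty triangle (the link condition $\mathrm{lk}(i)\cap\mathrm{lk}(j)=\mathrm{lk}(\{i,j\})$). I would prove the two implications directly on $P$ and invoke Steinitz's theorem only to conclude that the resulting triangulated sphere is again the boundary complex of a simple $3$-polytope.

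For the obstruction direction I assume a $3$-belt $(F_i,F_j,F_k)$ exists and show straightening is undefined. Since $F_i\cap F_j\cap F_k=\varnothing$, any common point of the edges $F_i\cap F_k$ and $F_j\cap F_k$ would lie in $(F_i\cap F_k)\cap(F_j\cap F_k)=F_i\cap F_j\cap F_k=\varnothing$, so these two edges are disjoint. After merging $F_i$ and $F_j$ into $\widetilde F$ both of them lie in $\widetilde F\cap F_k$, so the merged facet and $F_k$ would meet along two disjoint edges; as the intersection of two faces of a convex polytope is a single (possibly empty) face, this cannot occur, and the operation yields no polytope.

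For the converse I assume there is no $3$-belt $(F_i,F_j,F_k)$ and show the contraction produces a simple $3$-polytope. The key point is that every facet adjacent to both $F_i$ and $F_j$ must share one of the endpoints $v_1,v_2$ with $e$, since otherwise it would complete a $3$-belt; hence the only common neighbours are $F_a$ and $F_b$. Consequently, when $F_i$ and $F_j$ are merged, $F_a$ and $F_b$ each meet $\widetilde F$ along a single merged edge, every other neighbour of $F_i$ or $F_j$ stays adjacent to $\widetilde F$ along one edge, and the boundary cycles of $F_i$ and $F_j$ glue along $e$ into a single simple cycle bounding $\widetilde F$. So $K$ contracts to a genuine triangulated $2$-sphere, which by Steinitz's theorem is the boundary complex of a simple $3$-polytope, and straightening is defined. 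The exclusion of $P\simeq\Delta^3$ is then exactly the degenerate count: the contraction lowers the number of facets by one, and $\Delta^3$ is the unique simple $3$-polytope with four facets, so contracting any of its edges leaves only three facets and no polytope; for every $P\not\simeq\Delta^3$ there are at least five facets and this obstruction disappears (note that for $\Delta^3$ there are no $3$-belts at all, which is why it must be excluded separately).

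The step I expect to be the main obstacle is the converse, specifically verifying that after deleting $e,v_1,v_2$ and merging edges the combinatorial object really is a valid (simplicial) sphere, rather than merely checking the absence of double adjacencies: one must rule out all remaining degeneracies — coinciding edges, a facet collapsing to a bigon, or the two boundary arcs failing to close up into one cycle — and these are precisely what the no-empty-triangle (equivalently, link) condition guarantees. Keeping the endpoint analysis symmetric in $F_i$ and $F_j$ and correctly bookkeeping the two vertex resolutions at $v_1$ and $v_2$ is where the argument must be handled with most care.
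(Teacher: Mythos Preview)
The paper does not actually prove this lemma: it is stated with a citation to \cite{BE15a,BE16} and used as a tool, with no argument given in the text. So there is no in-paper proof to compare against.

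Your approach is the standard one and is correct. Passing to the dual simplicial sphere and reading straightening as the edge contraction $\{i,j\}\to *$ makes the statement exactly the link condition $\mathrm{lk}(i)\cap\mathrm{lk}(j)=\mathrm{lk}(\{i,j\})$, with a $3$-belt through $F_i,F_j$ corresponding to an empty triangle on $\{i,j\}$; Steinitz then returns you to a simple $3$-polytope. Your obstruction direction is clean. For the converse you have correctly isolated the only real work: checking that with no empty triangle the contraction yields an honest simplicial $2$-sphere. Two small points worth making explicit in a full write-up are that $F_a\neq F_b$ automatically (two facets of a convex $3$-polytope meet in at most one edge, so $F_a=F_b$ would force $F_i$ to be a bigon), and that the only way the merged facet $\widetilde F$ degenerates to a bigon is when both $F_i$ and $F_j$ are triangles, which forces $P\simeq\Delta^3$; this is precisely why the tetrahedron must be excluded by hand, as you note.
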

\begin{remark}
\label{rem:skstr}
We see that $(s,k)$-truncation is a combinatorial operation and is always defined. It is easy to show that the straightening along the edge is a combinatorially inverse operation. Lemma \ref{lem:str} gives the criterion when it is defined.
\end{remark}
\begin{corollary} 
\label{cor:flag}
A polytope $P$ is flag if and only if straightening along any its edge is defined.
\end{corollary}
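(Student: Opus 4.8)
The plan is to derive the corollary directly from Lemma \ref{lem:str} together with the definition of a flag polytope, treating the simplex as a separate (degenerate) case.

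For the forward implication, I would assume $P$ is flag, which by definition means $P\not\simeq\Delta^3$ and $P$ has no $3$-belt. Pick any edge $F_i\cap F_j$ of $P$. Since $P$ has no $3$-belt at all, in particular there is no $3$-belt $(F_i,F_j,F_k)$; applying the second part of Lemma \ref{lem:str} (the case $P\not\simeq\Delta^3$) then gives that straightening along $F_i\cap F_j$ is defined. As the edge was arbitrary, straightening is defined along every edge of $P$.

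For the converse, I would assume straightening along every edge of $P$ is defined. First I rule out $P\simeq\Delta^3$: the simplex does have edges, yet by the first clause of Lemma \ref{lem:str} no straightening operation is defined on $\Delta^3$, contradicting the hypothesis; hence $P\not\simeq\Delta^3$. It remains to show $P$ has no $3$-belt. Suppose, for contradiction, that $(F_i,F_j,F_k)$ is a $3$-belt. Consecutive facets of a belt are adjacent, and any two adjacent facets of a simple $3$-polytope meet in a single edge, so $F_i\cap F_j$ is a genuine edge of $P$. By Lemma \ref{lem:str}, the existence of the $3$-belt $(F_i,F_j,F_k)$ forces straightening along $F_i\cap F_j$ to be undefined, contradicting the hypothesis. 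Thus $P$ has no $3$-belt, and together with $P\not\simeq\Delta^3$ this is exactly the assertion that $P$ is flag.

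Since the statement is essentially a reformulation of Lemma \ref{lem:str} in the language of flagness, I do not expect a serious obstacle. The only points needing a word of care are in the converse direction: one must dispose of the simplex via the first clause of Lemma \ref{lem:str} rather than the iff-criterion, and one must observe that a $3$-belt automatically supplies an honest edge $F_i\cap F_j$ to which the criterion of Lemma \ref{lem:str} can be applied.
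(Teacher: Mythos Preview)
Your proof is correct and follows exactly the intended route: the paper states the corollary without proof, since it is an immediate reformulation of Lemma~\ref{lem:str} together with the definition of flagness, and your argument spells this out in full. The only extra care you take---handling $\Delta^3$ via the first clause of Lemma~\ref{lem:str} and noting that consecutive facets of a $3$-belt meet in an actual edge---is precisely what is needed to make the implicit argument rigorous.
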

Theorem \ref{th:3belts} implies the following result.
\begin{corollary}
\label{cor:ful}
Let $P$ be a fullerene, or more generally, a simple polytope with $p_3=0$, $p_4\le 2$, $p_7\le 1$, and $p_k=0$ for $k\ge 8$. Then straightening is defined along any edge. 
\end{corollary}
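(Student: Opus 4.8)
The plan is to obtain this as an immediate consequence of Theorem \ref{th:3belts} combined with Lemma \ref{lem:str} (or, equivalently, Corollary \ref{cor:flag}). First I would reduce to the general case: a fullerene has $p_3=p_4=p_7=0$ and $p_k=0$ for all $k\ge 8$, so it satisfies $p_3=0$, $p_4\le 2$, $p_7\le 1$, $p_k=0$ $(k\ge 8)$; hence it suffices to prove the assertion for an arbitrary simple $3$-polytope $P$ with these inequalities.

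Next I would apply Theorem \ref{th:3belts} verbatim: the hypotheses imposed on $P$ are literally those of that theorem, so $P$ has no $3$-belts, and in particular $P$ is a flag polytope. In particular $P\not\simeq\Delta^3$ (the simplex has $p_3=4\ne 0$, and by Definition \ref{def:belt} a flag polytope is not a simplex anyway), so the degenerate case of Lemma \ref{lem:str} does not occur.

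Then, for an arbitrary edge $F_i\cap F_j$ of $P$, the absence of $3$-belts means there is no $3$-belt of the form $(F_i,F_j,F_k)$; by Lemma \ref{lem:str} the straightening along $F_i\cap F_j$ is therefore defined. (Alternatively one invokes Corollary \ref{cor:flag} directly: $P$ is flag if and only if straightening is defined along every edge of $P$.) Since the edge was arbitrary, straightening is defined along every edge of $P$, which is the claim.

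This argument is just a concatenation of previously established results, so there is no genuine obstacle; the only point deserving a line of care is to confirm that the hypotheses of Theorem \ref{th:3belts} coincide exactly with those of the corollary and that the excluded case $P\simeq\Delta^3$ of Lemma \ref{lem:str} indeed cannot arise here.
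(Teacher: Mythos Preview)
Your proof is correct and follows the paper's approach exactly: the paper simply states that the corollary is implied by Theorem~\ref{th:3belts}, and you have spelled out precisely the intended chain---Theorem~\ref{th:3belts} gives flagness, then Corollary~\ref{cor:flag} (equivalently Lemma~\ref{lem:str}) yields that straightening is defined along every edge. Your remark that $P\not\simeq\Delta^3$ is the only small check needed, and you handle it correctly.
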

In fact we will need a special case that the straightenings we use are defined. This can be also proved using the following remarkable result.
\begin{lemma}[\cite{DSS13}, Case 2 in Subsection 2.5] 
\label{lem:ndf}
Let the sphere $S^2$ be glued edge-to-edge from pentagons, hexagons, and one $k$-gon with $3\leqslant k\leqslant 7$, such that the obtained graph is $3$-valent. Then this partition is combinatorially equivalent to the boundary of a simple $3$-polytope.  
\end{lemma}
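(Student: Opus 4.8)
\medskip\noindent\emph{Proof idea.}
The plan is to reduce the statement to Steinitz's theorem: a planar graph is the $1$-skeleton of a convex $3$-polytope if and only if it is $3$-connected, and if such a graph is $3$-valent the polytope is automatically simple. Let $G$ be the graph of the given partition, embedded in $S^2$; by hypothesis it is planar and $3$-valent, so it suffices to show that $G$ is $3$-connected. First I would observe that $G$ is connected, since a component of $G$ contained in a face of another component is impossible: every face is a disk whose interior meets no vertex or edge of $G$. For a connected $3$-valent graph, $3$-connectedness is the same as $3$-edge-connectedness, i.e.\ the absence of bridges and of $2$-edge-cuts; and once bridges and $2$-edge-cuts are excluded, the hypothesis $k\ge 3$ (which forbids monogonal and bigonal faces) forces $G$ to be simple as well, hence $3$-connected. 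A bridge $e$ cannot occur, because then the single face incident to $e$ would traverse $e$ twice on its boundary cycle, contradicting that this face is a genuine polygon. Hence only $2$-edge-cuts remain.

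Everything therefore comes down to ruling out $2$-edge-cuts. A standard planarity fact is that a minimal $2$-edge-cut $\{e_1,e_2\}$ is crossed by a simple closed curve meeting $G$ only in the interiors of $e_1$ and $e_2$; the two faces this curve passes through are then exactly the two faces incident to $e_1$, and also exactly the two faces incident to $e_2$. Thus $G$ has a $2$-edge-cut if and only if some two distinct faces $F_1\neq F_2$ share at least two edges (this also subsumes multiple edges of $G$ and bigonal faces). I would exclude this by a discharging argument on the two regions it creates. Writing the Euler relation as $\sum_F(6-|F|)=12$, each pentagon contributes $1$, each hexagon $0$, and the unique exceptional $k$-gon contributes $6-k\in\{-1,0,1,2,3\}$ since $3\le k\le 7$. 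Two faces sharing two edges split $S^2$ into $F_1$, $F_2$, and two disks $D_1,D_2$ with short boundary cycles, and one estimates how much of the total curvature $12$ each $D_i$ can enclose; passing to a counterexample that minimises the number of faces on one side then reduces matters to a region small enough to violate the face-size restrictions (no triangles and no quadrangles apart from the single exceptional face) outright. This is precisely the kind of analysis Do\v{s}li\'c carried out to show that a fullerene has no short face-belt (cf.\ Theorems \ref{th:3belts} and \ref{4belts-theorem}); the bound $3\le k\le 7$ is exactly what keeps us within a bounded perturbation of the fullerene case, so the argument goes through with the exceptional face placed on whichever of $D_1,D_2$ contains it. Equivalently, one may dualise: since $G$ is $3$-valent, the dual partition $\mathcal{P}^{*}$ is a triangulation of $S^2$ whose vertex degrees are the face sizes of the original partition, and the properties just verified say exactly that $\mathcal{P}^{*}$ is a genuine simplicial complex with minimum degree $\ge 3$, hence a simplicial $2$-sphere, hence polytopal; its dual is then the required simple $3$-polytope.

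The step I expect to be the main obstacle is the discharging estimate above: one has to control the curvature enclosed on \emph{both} sides of a hypothetical two-edge sharing simultaneously and verify that the bound stays strict for every value of $k$ with $3\le k\le 7$. That the hypothesis on $k$ cannot simply be dropped is visible at the endpoints: for $k\le 2$ the exceptional face is itself a monogon or a bigon and $G$ is not even simple, while for $k\ge 8$ one leaves the range covered by the belt estimates of Theorem \ref{th:3belts}.
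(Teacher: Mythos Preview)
The paper does not give its own proof of Lemma~\ref{lem:ndf}: the statement is quoted from \cite{DSS13} (``Case~2 in Subsection~2.5'') and used as a black box, chiefly to justify Corollary~\ref{cor:ndf} and to certify that the Schlegel diagram in Fig.~\ref{fig:7ful} depicts an actual polytope. There is therefore no in-paper argument to compare your proposal against.

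Your outline is the standard route and is sound as far as it goes: the Steinitz reduction is correct; cubic graphs do have equal vertex- and edge-connectivity; bridges are excluded because each face is a genuine polygon; and $k\ge 3$ indeed rules out loops and multi-edges by forbidding $1$- and $2$-gonal faces. The only substantive step is the exclusion of $2$-edge-cuts, and here your sketch stops short of a proof. The disk Euler identity gives $\sum_{F\subset D_i}(6-|F|)=6-b_2+b_3=2+|\alpha_i|+|\beta_i|\ge 4$, but summing the two sides yields only $|F_1|+|F_2|\ge 8$, which is no contradiction when $F_1,F_2$ are pentagons or hexagons (and the total pentagon count is likewise compatible). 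One actually needs the minimality/inductive reduction you mention, pushing the smaller region down until its boundary length forces a face of size $\le 2$, and the case analysis---including the degenerate situation where $e_1$ and $e_2$ share an endpoint and the placement of the exceptional $k$-gon among $F_1,F_2,D_1,D_2$---has to be carried out explicitly. In short: right plan, and the remaining gap is precisely the one you yourself flagged; the paper simply outsources the whole thing to \cite{DSS13}.
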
 
\begin{corollary}
\label{cor:ndf}
If a straightening along the edge of a simple $3$-polytope with all facets pentagons, hexagons and one facet quadrangle or a heptagon gives a simple partition of the sphere $S^2$ with the same properties, then the straightening is well-defined.      
\end{corollary}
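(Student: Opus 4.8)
\emph{Proof proposal.} The plan is to deduce the statement almost directly from Lemma~\ref{lem:ndf}. When the straightening along an edge is carried out it replaces the boundary of $P$ by a partition of $S^2$, and by hypothesis this partition is of exactly the type covered by Lemma~\ref{lem:ndf}; the conclusion of that lemma is precisely that the resulting partition is the boundary complex of a simple $3$-polytope, which is what "the straightening is well defined" means.

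First I would fix, once and for all, the combinatorial recipe for the straightening along an edge $e=F_p\cap F_q$, \emph{independently} of whether it is defined: delete $e$ and its two endpoints $u,v$, amalgamate $F_p$ and $F_q$ across $e$ into one $2$-cell $F_*$, and merge the two pairs of edges $F_p\cap F_a,\ F_q\cap F_a$ and $F_p\cap F_b,\ F_q\cap F_b$ into single edges, where $F_a,F_b$ are the third facets at $u,v$. Since $F_p$ and $F_q$ are disks glued along the arc $e$, the cell $F_*$ is again a polygon with $|F_*|=|F_p|+|F_q|-4$, while $|F_a|$ and $|F_b|$ each drop by one; the outcome $\widetilde P$ is in all cases a partition of $S^2$ into polygons whose graph is $3$-valent. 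I would record that, by Remark~\ref{rem:skstr} and Lemma~\ref{lem:str}, requiring the straightening to be well defined is the same as requiring $\widetilde P$ to be the boundary complex of a simple $3$-polytope, equivalently that $e$ lie in no $3$-belt.

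Now comes the one-line core. By hypothesis $\widetilde P$ is a simple ($3$-valent) partition of $S^2$ whose facets are pentagons, hexagons and a single quadrilateral or heptagon; since $4$ and $7$ both lie in the interval $3\le k\le 7$, Lemma~\ref{lem:ndf} applies and yields that $\widetilde P$ is combinatorially the boundary of a simple $3$-polytope $Q$. Hence the recipe produces a genuine simple polytope, so the straightening along $e$ is well defined and carries $P$ to $Q$ (equivalently, $P$ is recovered from $Q$ by the corresponding $(s,k)$-truncation). As a sanity check against Lemma~\ref{lem:str}: were $e$ contained in a $3$-belt $(F_p,F_q,F_k)$, the amalgamated facet $F_*$ would meet $F_k$ along the two distinct edges $F_p\cap F_k$ and $F_q\cap F_k$, so $\widetilde P$ would carry the $2$-belt $(F_*,F_k)$ and could not be polytopal, while still being a $3$-valent polygonal partition with the prescribed facet sizes; Lemma~\ref{lem:ndf} excludes exactly this, which is why no such $3$-belt can occur.

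I do not expect a deep obstacle: the corollary is essentially bookkeeping plus a citation of Lemma~\ref{lem:ndf}. The one step that does need genuine care is the setup of the second paragraph, namely verifying that the straightening recipe is a well-posed combinatorial operation on \emph{every} such $P$ (so that the phrase "gives a simple partition of $S^2$ with the same properties" is meaningful before well-definedness is known), checking the facet-size bookkeeping so that the range $3\le k\le 7$ of Lemma~\ref{lem:ndf} is actually hit, and being explicit that "the recipe produces a simple polytope" is indeed what "the straightening is well defined" means.
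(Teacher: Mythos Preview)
Your proposal is correct and follows the same approach as the paper: the corollary is stated there without proof, as an immediate consequence of Lemma~\ref{lem:ndf}, and you have simply spelled out the details of that deduction. The extra care you take---making the combinatorial recipe for straightening explicit, checking the facet-size bookkeeping, and the sanity check via Lemma~\ref{lem:str} that a $3$-belt through $e$ would force a $2$-belt in $\widetilde P$---is sound and more than the paper provides.
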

\section{Construction of fullerenes by truncations}
\label{sec:constr}
\subsection{Seven truncations}
\label{subsec:7tr}
\begin{definition}
\label{def:sful}
Let $\mathcal{F}_{-1}$ be the set of combinatorial simple $3$-polytopes with all facets pentagons and hexagons except for one  facet quadrangle.  The Euler formula implies that any polytope in $\mathcal{F}_{-1}$ has $p_5=10$.

Let $\mathcal{F}$ be the set of all fullerenes, and $\mathcal{F}^{IPR}$ be the set of all $IPR$-fullerenes.

Let $\mathcal{F}_1$ be the set of simple $3$-polytopes with all facets pentagons and hexagons except for one facet heptagon adjacent to a pentagon such that either there are two pentagons with the common edge intersecting the heptagon and a hexagon  (Fig. \ref{fig:7556}a), or for any two adjacent pentagons exactly one of them is adjacent to the heptagon (Fig. \ref{fig:7556}b). Any polytope in $\mathcal{F}_1$ has $p_5=13$. Let $\mathcal{F}_1^{IPR}$ be the set of polytopes in $\mathcal{F}_1$ without adjacent pentagons.  
\begin{figure}
\includegraphics[width=0.4\textwidth]{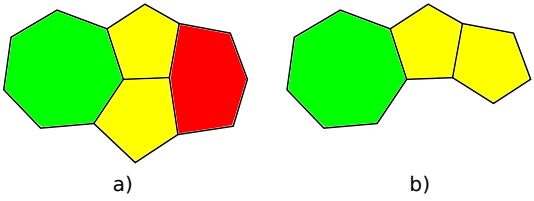}
\caption{Fragments on a polytope in $\mathcal{F}_1$}
\label{fig:7556}
\end{figure}
Set $\mathcal{F}_s=\mathcal{F}_{-1}\sqcup \mathcal{F}\sqcup\mathcal{F}_1$.
\end{definition}

\begin{theorem}[\cite{BE15b,BE16}] 
\label{th:fultr}
Any polytope in $\mathcal{F}_s$ can be obtained from the dodecahedron by a sequence of $p_6+2p_7-p_4$ truncations: $(1;4,5)$-, $(1;5,5)$-, $(2,6;4,5)$-, $(2,6;5,5)$-,\linebreak  $(2,6;5,6)$-,  $(2,7;5,5)$-, and $(2,7;5,6)$-, in such a way that intermediate polytopes belong to $\mathcal{F}_s$. Moreover (see Fig. \ref{fig:trgraph}),
\begin{enumerate} 
\item any polytope in $\mathcal{F}_{-1}$ can be obtained by a $(1;5,5)$- or $(1;4,5)$-truncation from a polytope in  $\mathcal{F}$ or $\mathcal{F}_{-1}$ respectively;
\item any polytope in $\mathcal{F}_1$ can be obtained by a $(2,6;5,6)$- or  $(2,7;5,6)$-truncation from a polytope in $\mathcal{F}$ or  $\mathcal{F}_1$ respectively; 
\item polytope in $\mathcal{F}$ can be obtained by a  $(2,6;5,5)$-, \linebreak $(2,6;4,5)$-, or $(2,7;5,5)$-truncation from a polytope in $\mathcal{F}$, $\mathcal{F}_{-1}$ or $\mathcal{F}_1$ respectively.
\end{enumerate}
\end{theorem}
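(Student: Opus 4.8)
The plan is to induct on $N(P):=p_6+2p_7-p_4$, which is easily checked to be nonnegative on $\mathcal{F}_s$ and to vanish exactly on the dodecahedron. The first, bookkeeping step is to verify that each of the seven truncations, used in the situations listed in (1)--(3), sends a polytope of $\mathcal{F}_s$ to a polytope of $\mathcal{F}_s$, raises $N$ by exactly $1$, and produces the indicated change of family. This is a direct computation from the Euler relation $2p_4+p_5-p_7=12$ (valid throughout $\mathcal{F}_s$) and the local changes of facet sizes: a $(1;m_1,m_2)$-truncation creates a quadrangle and enlarges the two facets $H_1,H_2$ meeting the truncated edge; a $(2,6;m_1,m_2)$- (resp. $(2,7;m_1,m_2)$-) truncation shrinks a hexagon (resp. heptagon) by one edge, creates a pentagon, and enlarges the two facets $H_1,H_2$ adjacent to the truncated pair of edges. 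By Corollary \ref{cor:ndf} (together with Theorems \ref{th:3belts}, \ref{4belts-theorem} and Lemma \ref{lem:ndf}) the combinatorially inverse straightening of each of these operations is well defined on the relevant polytopes of $\mathcal{F}_s$. Hence it suffices to show: for every $P\in\mathcal{F}_s$ with $N(P)\ge1$ there is an edge along which the straightening is one of these seven inverse operations, lands in $\mathcal{F}_s$, drops $N$ by $1$, and has the source family prescribed in (1)--(3).

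For the base case, $N(P)=0$ forces $p_6=p_7=0$ and $p_4\in\{0,1\}$; since no simple $3$-polytope has $p_4=1$, $p_5=10$, $p_6\le1$, we get $p_4=0$, so $P$ is a fullerene with no hexagons, i.e. $P\simeq C_{20}$.

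For the inductive step I would argue by cases on the family of $P$. If $P\in\mathcal{F}_{-1}$, let $Q$ be the quadrangle and $(G_1,G_2,G_3,G_4)$ the $4$-belt around it, unique by Theorem \ref{4belts-theorem}(2). The inverse of a $(1;m_1,m_2)$-truncation collapses a quadrangle, lowering by $1$ the sizes of one pair of opposite neighbours while the other opposite pair keeps its sizes and becomes adjacent. Taking this quadrangle to be $Q$, the outcome lies in $\mathcal{F}_s$ precisely when some opposite pair $\{G_i,G_{i+2}\}$ has sizes $\{6,6\}$ (inverse of $(1;5,5)$, giving a fullerene) or $\{5,6\}$ (inverse of $(1;4,5)$, giving a polytope of $\mathcal{F}_{-1}$); so the task here is to prove that $Q$ always has such a pair, equivalently that the $4$-belt around $Q$ is not made of four pentagons. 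If $P\in\mathcal{F}$ or $\mathcal{F}_1$, one must use the inverse of a $(2,k;m_1,m_2)$-truncation: it deletes a pentagon $R$, enlarges one of its neighbours $F$ from a $(k-1)$-gon to a $k$-gon, and shrinks by $1$ the two neighbours $H_1,H_2$ of $R$ adjacent (around $R$) to $F$. For a fullerene one then needs a pentagon $R$ with a neighbour $F$ for which $(|F|;|H_1|,|H_2|)$ equals $(5;6,6)$ (inverse of $(2,6;5,5)$, staying in $\mathcal{F}$), $(5;5,6)$ (inverse of $(2,6;4,5)$, dropping to $\mathcal{F}_{-1}$), or $(6;6,6)$ (inverse of $(2,7;5,5)$, dropping to $\mathcal{F}_1$); for $P\in\mathcal{F}_1$ one runs the same scheme with the heptagon playing the role of one of $H_1,H_2$ (inverse of $(2,6;5,6)$, dropping to $\mathcal{F}$) or of $F$ (inverse of $(2,7;5,6)$, staying in $\mathcal{F}_1$). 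The whole content is therefore the existence of such a pentagon $R$, together with --- for the moves that land in $\mathcal{F}_1$ --- the verification that the resulting polytope satisfies the extra pentagon-adjacency conditions of Definition \ref{def:sful}.

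The main obstacle is precisely this existence statement for fullerenes, since reducing a fullerene within the class of fullerenes cannot always be achieved with finitely many growth operations (\cite{BGJ09}): the heart of the matter is that the admissible jumps into $\mathcal{F}_{-1}$ and into $\mathcal{F}_1$ always provide an escape. I would approach it by fixing a hexagon adjacent to a pentagon --- one exists whenever $P\ne C_{20}$, for otherwise every pentagon would border only pentagons and $p_6$ would be $0$ --- and examining the patch formed by it and the surrounding facets, ruling out the ``bad'' local configurations (those forcing two quadrangles, or a quadrangle together with a heptagon) by means of the $k$-belt structure theory of simple $3$-polytopes with at most hexagonal facets (\cite{E15}) and Theorems \ref{th:3belts}, \ref{4belts-theorem}. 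The same circle of ideas, applied near $Q$ (using the uniqueness of its $4$-belt) and near the heptagon (where Definition \ref{def:sful} is tailored to furnish a usable pentagon), should handle the $\mathcal{F}_{-1}$ and $\mathcal{F}_1$ cases; once a legal straightening has been produced, its outcome is a simple partition of $S^2$ into pentagons, hexagons and at most one quadrangle or heptagon, hence a genuine polytope by Corollary \ref{cor:ndf}, and by construction it lies in $\mathcal{F}_s$ --- which closes the induction.
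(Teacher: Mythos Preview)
The paper does not prove Theorem~\ref{th:fultr}; it is quoted from \cite{BE15b,BE16} with no argument given here.  So there is no in-paper proof to compare against, only the cited sources.

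Your framework is the correct one and is essentially what those references do: downward induction on $N(P)=p_6+2p_7-p_4$, at each step producing a straightening that lands back in $\mathcal{F}_s$.  But as you yourself signal (``I would approach it by\dots''), what you have written is a plan rather than a proof.  The entire content of the theorem lies in the existence, for every $P\in\mathcal{F}_s$ with $N(P)\ge1$, of a specific edge whose straightening is one of the seven inverse operations \emph{and} whose output satisfies the constraints of $\mathcal{F}_s$.  You identify this correctly but do not carry it out.  Two points deserve emphasis.  First, the extra adjacency conditions in the definition of $\mathcal{F}_1$ (Definition~\ref{def:sful}, Fig.~\ref{fig:7556}) are not cosmetic: they are engineered exactly so that the induction closes, and verifying that the reduced polytope satisfies them is a genuine case analysis, not a formality.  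Second, in the $\mathcal{F}_{-1}$ case you reduce to showing that the $4$-belt around the quadrangle is never four pentagons; this is true, but it is a lemma that needs its own argument.

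One smaller gap: your base case asserts that no simple $3$-polytope has $p_4=1$, $p_5=10$, $p_6\le1$ (and all other $p_k=0$).  You need this both to pin down $N=0$ as the dodecahedron and to ensure $N\ge0$ on all of $\mathcal{F}_{-1}$, but you give no justification.  It is correct, yet it is not a one-line fact; you should either prove it or cite it.
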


\begin{figure}
\includegraphics[width=0.4\textwidth]{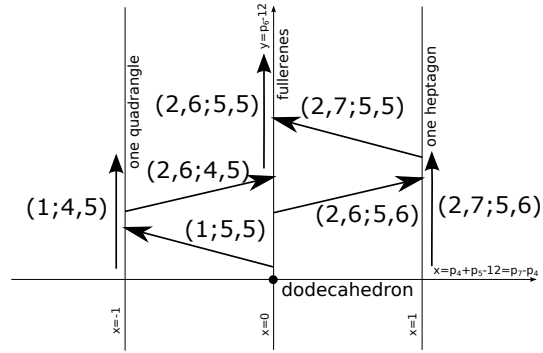}
\caption{Scheme of the truncation operations}
\label{fig:trgraph}
\end{figure}
\begin{remark}
\label{rem:7ful} Fig. \ref{fig:7ful} together with Lemma \ref{lem:ndf} shows a simple polytope $P\notin\mathcal{F}_1$ with all facets pentagons, hexagons, and only one heptagon, which is adjacent to a pentagon. This polytope can not be obtained by the above seven truncations from a polytope $Q$ in the same class. Indeed, \linebreak $(1;4,5)$-, $(1;5,5)$-, and $(2,6;4,5)$-truncation are not allowed, since $P$ has no quadrangles, and $Q$ can not contain a quadrangle and a heptagon simultaneously.  $Q$ should be obtained from $P$ by a straightening along some edge. This edge can belong either to two pentagons, or to a hexagon and a pentagon. Common edge of two pentagons in $P$ either intersects two pentagons, or a pentagon and a hexagon, or a pentagon and the heptagon. This corresponds to $(2,6;4,4)$-, \linebreak $(2,6;4,5)$-, or $(2,6;4,6)$-truncations; hence $(2,6;5,5)$- and $(2,6;5,6)$-truncations are not allowed. Common edge of a pentagon and a hexagon in $P$ either intersects two pentagons, or  a pentagon and a hexagon, or two hexagons, or a pentagon and the heptagon. This corresponds to $(2,7;4,4)$-, \linebreak$(2,7;4,5)$-, $(2,7;5,5)$-, or $(2,7;5,7)$-truncations, hence \linebreak $(2,7;5,6)$-truncation is also not allowed. In the case of \linebreak $(2,7;5,5)$-truncation $Q$ contains two heptagons; hence this operation is also not allowed.
\end{remark}
\begin{figure}
\includegraphics[width=0.4\textwidth]{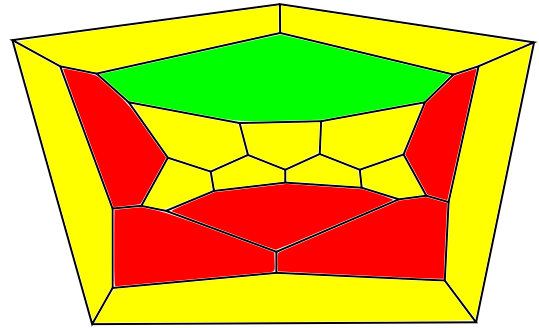}
\caption{Polytope not in $\mathcal{F}_1$ that can not be obtained as a result of our seven truncations}
\label{fig:7ful}
\end{figure}

\subsection{Getting rid of quadrangles}
\label{subsec:grt}
It turns out that in Theorem \ref{th:fultr} we can get rid of quadrangles. 
\paragraph{Two families of fullerenes}
\label{par:fam}
\begin{definition}
\label{def:f1}
\begin{enumerate}
\item Take the dodecahedral cap $C_1$, drawn on Fig. \ref{fig:f1}a). 
\item Add a $5$-belt of hexagons around the patch (Fig. \ref{fig:f1}b). The boundary of the new patch is combinatorially the same.
\item After $k$ steps add the same cap again to obtain a fullerene $D_{5k}$ (The case $k=2$ see on Fig. \ref{fig:f1}c).
\end{enumerate}
Fullerenes $D_{5k}$ for $k\geqslant 1$ are known as  {\it $(5,0)$-nanotubes}.
\begin{figure}
\includegraphics[width=0.4\textwidth]{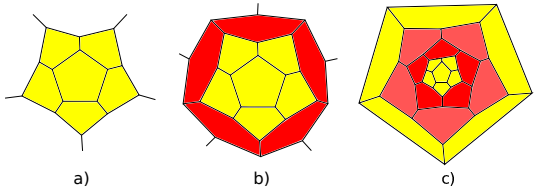}
\caption{Construction of the first family of fullerenes}
\label{fig:f1}       
\end{figure}
\end{definition}
The insertion of a belt can be realized as a sequence of $(1;4,5)$-, $(1;5,5)$-, and $(2,6;4,5)$-truncations, see Fig. \ref{fig:dkb}
\begin{figure}
\includegraphics[width=0.4\textwidth]{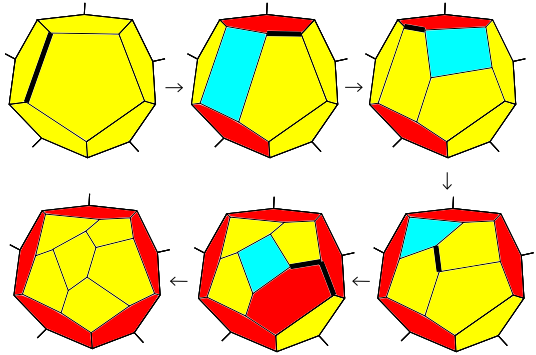}
\caption{Insertion of a belt as a sequence of truncations}
\label{fig:dkb}       
\end{figure}

\begin{proposition}[\cite{BE15a,BE16}]
\label{prop:f1} 
A fullerene is combinatorially equivalent to $D_{5k}$ for $k\geqslant0$ if and only if it contains the cap $C_1$.
\end{proposition}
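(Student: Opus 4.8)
The ``only if'' direction will be immediate: by Definition~\ref{def:f1}, $D_{5k}$ is built starting from $C_1$, hence contains it. The plan for the converse is a layer-by-layer analysis of $P$ outwards from the cap.

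Assume $P$ is a fullerene containing $C_1$. For $j\geqslant1$ let $C_j$ denote the patch obtained from $C_1$ by performing $j-1$ insertions of a five-belt of hexagons as in Definition~\ref{def:f1}; by that construction $\partial C_j$ is for every $j$ one and the same combinatorial decagon --- a $10$-cycle whose vertices alternately lie on two and on one facet of $C_j$. Since $p_6(P)<\infty$ there is a largest $k$ with $P\supseteq C_k$, and I would fix this $k$. First I would prove, by a local count (using that every facet of $P$ is a pentagon or a hexagon and that each boundary facet of $C_k$ has exactly two boundary edges), that the facets of $P$ meeting $\partial C_k$ but not contained in $C_k$ are pairwise distinct and form a five-belt $R=(G_1,\dots,G_5)$ in which $G_i$ is adjacent to the two boundary facets of $C_k$ meeting at the $i$-th ``corner'' of $\partial C_k$ and to $G_{i-1},G_{i+1}$; thus $G_i$ has one further free edge if it is a pentagon and two if it is a hexagon.

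The core step is to show that $R$ is monochromatic. The key local fact I would establish is that if two consecutive facets $G_i,G_{i+1}$ of $R$ are pentagons, then their free edges lie on one and the same facet of $P$; hence a maximal run of $r$ consecutive pentagons in the cyclic sequence $R$ is capped by a single facet, adjacent to those $r$ pentagons and to the two facets flanking the run. Then I would run through the finitely many cyclic arrangements of pentagons and hexagons along the five-belt $R$ in which both types occur, and check that each of them forces either a triangular or a quadrangular facet of $P$, or else forces $P$ to close up with fewer than twelve pentagons --- all impossible for a fullerene. Consequently all $G_i$ are pentagons, or all are hexagons. In the all-hexagons case $R$ is glued to $\partial C_k$ exactly as in the belt insertion, so $C_k\cup R\simeq C_{k+1}$, contradicting the maximality of $k$; hence $R$ is five pentagons.

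Finally I would identify $P$. With $R$ equal to five pentagons, each $G_i$ has a single free edge, so the complement $Q=\partial P\setminus(C_k\cup R)$ is a patch whose boundary is a pentagon all five of whose vertices lie on one facet of $Q$. The Euler identity for a disk patch --- for a disk with $n$-gon boundary and $n_2$ boundary vertices lying on two of its facets one has $\sum_{f}(6-|f|)=6-n+2n_2$ --- gives $\sum_{f\in Q}(6-|f|)=1$, so a facet of $Q$ incident to the boundary is incident to all five boundary edges; a short argument (a hexagon in that position would create a boundary of length one) shows it is a single pentagon filling $Q$. Then $P=C_k\cup R\cup Q$ is the cap $C_k$ closed off by a copy of $C_1$, which by construction is $D_{5(k-1)}$. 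The hardest part will be the monochromaticity of $R$: it is elementary but requires careful bookkeeping of the cyclic patterns of pentagons and hexagons along the five-belt and of how the facets just outside $R$ are forced, using only that a fullerene has no triangular or quadrangular facet and exactly twelve pentagons.
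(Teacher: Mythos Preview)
The paper does not actually prove Proposition~\ref{prop:f1}; it merely cites \cite{BE15a,BE16}. The closest in-paper analogue is the proof of Theorem~\ref{prop:f2} (for the cap $C_2$), which proceeds via Lemma~\ref{131313-lemma} and relies essentially on Theorem~\ref{5belts-theorem} about $5$-belts. So there is no internal proof to compare against, but your approach can be set against the tools the paper assembles.

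Your layer-by-layer scheme is sound. The argument that the facets just outside $C_k$ are five pairwise distinct facets forming a $5$-belt $R$ is correct and uses exactly the flagness (no $3$-belts) and absence of quadrangles that the paper records in Theorems~\ref{th:3belts} and~\ref{4belts-theorem}. Your endgame (all $G_i$ pentagons $\Rightarrow$ $Q$ is a single pentagon; all $G_i$ hexagons $\Rightarrow$ $C_{k+1}\subseteq P$) is also fine; in fact once you know all five outer boundary vertices of $Q$ have their third edge outside $Q$, the boundary facet of $Q$ is unique and must be a pentagon, so the Euler computation is not strictly needed.

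There is, however, a genuine gap in the monochromaticity step. Your stated dichotomy ``triangle/quadrangle is forced, or $P$ closes with fewer than twelve pentagons'' is not correct: the patch Euler identity you quote shows that for \emph{every} cyclic arrangement of pentagons and hexagons along $R$ the complement $Q$ carries exactly the right number of pentagons to make twelve in total. The contradictions in the mixed cases are purely local --- one is forced into a quadrangular facet, or into two distinct edges shared by the same pair of facets (equivalently, a $3$- or $4$-belt) --- never a global pentagon shortfall. For instance, with $G_1,G_2$ pentagons and $G_3,G_4,G_5$ hexagons, the single ``capping'' facet $F$ over the pentagon run is adjacent to $G_5,G_1,G_2,G_3$; if $F$ is a pentagon one finds that the next outer facet meets $G_4$ along both of its free edges (impossible in a polytope), while if $F$ is a hexagon the next outer facet becomes a quadrangle. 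The other mixed patterns are handled similarly, but this is precisely the content of (the relevant special case of) Theorem~\ref{5belts-theorem}, and it is more work than you indicate. So either carry out the case analysis with the correct local obstructions, or --- much more economically --- once you have established that $R$ is a $5$-belt, invoke Theorem~\ref{5belts-theorem}: since the $C_k$-side of $R$ is never a single pentagon, either $R$ is all hexagons (contradicting maximality of $k$) or the $Q$-side is a single pentagon, and you are done. This is in spirit how the paper organises the parallel proof for $C_2$.
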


The significance of the family $D_{5k}$ is described by the following theorem, which follows directly from \cite{KS08} or  \cite{KM07}.
\begin{theorem}[\cite{BE15b,BE16}]
\label{5belts-theorem} Let $P$ be a fullerene. Then the following statements hold.\\
{\bf I.} Any pentagonal facet is surrounded by a $5$-belt.
There are $12$ belts of this type.\\
{\bf II.} If there is a $5$-belt not surrounding a pentagon, then
\begin{enumerate}
\item it consists only of hexagons;
\item the fullerene is combinatorially equivalent to the polytope $D_{5k}$, $k\geqslant 1$.
\item the number of $5$-belts is $12+k$.
\end{enumerate}
\end{theorem}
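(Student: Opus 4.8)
The plan is to prove I directly from flagness and II by an inductive ``layering'' argument resting on a lemma about $5$-belts together with Proposition~\ref{prop:f1}.

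\emph{Part I.} Let $F$ be a pentagonal facet with neighbours $F_1,\dots,F_5$ listed in cyclic order around $F$. Consecutive neighbours share a vertex of $F$, so $F_i\cap F_{i+1}\neq\varnothing$. If some non-consecutive pair $F_i,F_j$ met, then $F,F_i,F_j$ would be pairwise adjacent; since a fullerene is flag (no $3$-belts, by Theorem~\ref{th:3belts} or \cite{D98}) they would have a common vertex $v$, which would be a vertex of $F$ lying on three facets $F,F_i,F_j$ --- but the two facets through a vertex of $F$ other than $F$ are consecutive neighbours of $F$, a contradiction. The same argument gives $F_1\cap\dots\cap F_5=\varnothing$, so $(F_1,\dots,F_5)$ is a $5$-belt surrounding $F$. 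Distinct pentagons give distinct belts (the surrounded pentagon is recovered from the belt, as $p_5=12$ forbids both sides of a $5$-belt being single pentagons), so there are exactly $p_5=12$ such belts.

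\emph{Part II, set-up.} Let $\mathcal B=(F_1,\dots,F_5)$ be a $5$-belt bounding patches $D_1,D_2$. Since two facets of a simple $3$-polytope share at most one edge, $F_i\cap F_{i+1}$ is a single edge $e_i$, with one endpoint on $\partial D_1$ and one on $\partial D_2$; hence $F_i$ has the two ``side'' edges $e_{i-1},e_i$ plus $a_i\ge 1$ edges on $\partial D_1$ and $b_i\ge 1$ on $\partial D_2$, so $F_i$ is a $(2+a_i+b_i)$-gon, a pentagon or a hexagon. If a patch $D_j$ were a single facet $G$, its neighbours would be exactly $F_1,\dots,F_5$, so $G$ would be a pentagon surrounded by $\mathcal B$; since $\mathcal B$ surrounds no pentagon, each of $D_1,D_2$ contains at least two facets.

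\emph{Part II, layering.} The heart of the proof is a \emph{layer lemma}: if a $5$-belt of a fullerene bounds a patch $D$ with at least two facets, then $|\partial D|=10$ and either $D$ is the dodecahedral cap $C_1$, or the five facets of $D$ incident to $\partial D$ form another $5$-belt $\mathcal B'$ whose patch on the side away from $\mathcal B$ again contains at least two facets. I would prove it by first establishing $|\partial D|=10$ --- the absence of $3$-belts (Theorem~\ref{th:3belts}) and of $4$-belts (Theorem~\ref{4belts-theorem}(1), since $p_4=0$) rules out ``pinched'' collars with shorter boundary, while an Euler-characteristic count on the annular collar between $\mathcal B$ and the next boundary cycle bounds $|\partial D|$ from above --- and then classifying how pentagons and hexagons can fill a collar of width one, forcing $a_i=2$ along $\partial D$. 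Iterating the lemma outwards on both sides of $\mathcal B$ must terminate (finitely many facets) at copies of $C_1$, so $P$ consists of two caps $C_1$ joined by a chain of $5$-belts; hence $P$ contains $C_1$, and by Proposition~\ref{prop:f1} $P\simeq D_{5k}$ with $k\ge 1$ since $P$ is not the dodecahedron --- this is (2). The two caps already contain $6+6=12=p_5$ pentagons, so every facet outside the caps, in particular every facet of $\mathcal B$, is a hexagon --- this is (1). Finally the chain has exactly $k$ intermediate $5$-belts, each a ring of five hexagons, and the same lemma shows that every $5$-belt not surrounding a pentagon is one of these rings; together with the $12$ pentagon-belts of Part~I, all pairwise distinct, this yields exactly $12+k$ $5$-belts --- this is (3).

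\emph{Main obstacle.} The only substantial step is the layer lemma: pinning down $|\partial D|=10$ and classifying the possible width-one collars. This is precisely the combinatorics of cyclic $5$-edge-cuts in fullerene graphs worked out in \cite{KS08,KM07} (and for fullerenes in \cite{D03}); everything else is bookkeeping.
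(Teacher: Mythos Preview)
The paper does not actually prove Theorem~\ref{5belts-theorem} in the text: it is attributed to \cite{BE15b,BE16} and introduced with the remark that it ``follows directly from \cite{KS08} or \cite{KM07}.'' Your proposal is therefore not competing with a proof in the paper but supplying one where the paper only gives a citation, and your strategy---reduce Part~II to the cyclic $5$-edge-cut classification of \cite{KS08,KM07}---is exactly the reduction the paper invokes.

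Your Part~I is complete and correct; the flagness argument is standard. For Part~II your layer lemma is the right organizing device and the deduction of items (1)--(3) from it via Proposition~\ref{prop:f1} is sound. One small point worth making explicit: the claim $|\partial D|=10$ is not a mere preliminary but is already equivalent to item~(1). Indeed, with $a_i,b_i\ge 1$ and $a_i+b_i\in\{3,4\}$, if the belt has $h$ hexagons then $\sum a_i+\sum b_i=15+h$, so $|\partial D_1|=|\partial D_2|=10$ forces $h=5$. Thus your sketch of how to obtain $|\partial D|=10$ (ruling out $a_i=1$ via the absence of $3$- and $4$-belts, plus an Euler count) is precisely where the content of \cite{KS08,KM07} lives; you correctly flag this as the main obstacle and defer to those references, which is no more and no less than what the paper itself does.
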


\begin{definition}
\label{def:f2}
\begin{enumerate}
\item Take the dodecahedral cap $C_2$ drawn on Fig. \ref{fig:f2} a). 
\item Add $3$ hexagons adjacent to facets with single edge on the boundary (Fig. \ref{fig:f2} b). The boundary of the patch remains combinatorially the same.
\item After $k$ steps ($k=2$ see on Fig. \ref{fig:f2} c) add the same cap again to obtain a fullerene $F_{3k}$ (The case $k=5$ see on Fig. \ref{fig:f2} d).
\end{enumerate}
Fullerenes $F_{3k}$ for $k\geqslant 2$ are also known as {\it $(3,3)$-nanotubes}.
\begin{figure}
\includegraphics[width=0.4\textwidth]{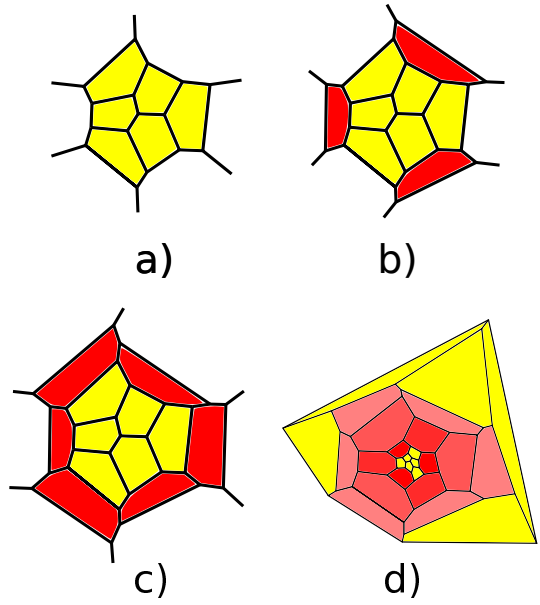}
\caption{Construction of the second family of fullerenes}
\label{fig:f2}       
\end{figure}
\end{definition}

The addition of three hexagons  can be realized as a sequence of $(1;4,5)$-, $(1;5,5)$-, and $(2,6;4,5)$-truncations, see Fig. \ref{fig:op2}
\begin{figure}
\includegraphics[width=0.4\textwidth]{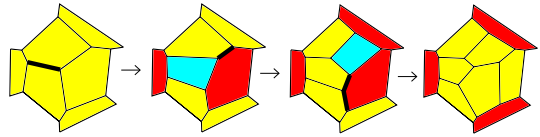}
\caption{Addition of three hexagons as a sequence of truncations}
\label{fig:op2}       
\end{figure}

\begin{theorem}[\cite{BE15a}]
\label{prop:f2} 
A fullerene is combinatorially equivalent to $F_{3k}$ for $k\geqslant0$ if and only if it contains the cap $C_2$.
\end{theorem}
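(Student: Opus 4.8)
The plan is to imitate the proof of Proposition~\ref{prop:f1}: one analyses how a fullerene can extend the cap $C_2$ across its boundary cycle, using the belt results of Section~\ref{sec:tools} — in particular that fullerenes have no $3$- or $4$-belts (Theorems~\ref{th:3belts} and~\ref{4belts-theorem}) and that every pentagon lies in a unique surrounding $5$-belt (Theorem~\ref{5belts-theorem}, part~I). The ``only if'' direction is immediate from Definition~\ref{def:f2}: the cap $C_2$ is never altered by the growth steps (each step modifies only the current boundary patch, and the final step glues a second copy of $C_2$), so $C_2$ is a subcomplex of $\partial F_{3k}$ for every $k\geqslant 0$.

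For the converse, let $P$ be a fullerene with $C_2\subset\partial P$. Two bookkeeping remarks: since $C_2$ is a disk cut out of the dodecahedron it has $6$ facets, all pentagons, hence accounts for exactly $6$ of the $12$ pentagons of $P$; and gluing two copies of $C_2$ along their common boundary yields a fullerene with $p_6=0$, so $F_0\simeq C_{20}$. The closed complement $\overline{\partial P\setminus C_2}$ is a disk with boundary combinatorially $\partial C_2$ carrying the remaining $6$ pentagons. I would then induct on $p_6(P)$. The crux is to pin down the facets of $P$ meeting the cycle $\partial C_2$ from outside: because there are no quadrangles and no small belts, and the six pentagons of $C_2$ touch $\partial C_2$ in the specific pattern dictated by its combinatorial type, their surrounding $5$-belts force this corona, so that $C_2$ together with it is either (i) completed by a second copy of $C_2$ — whence $P\simeq F_0\simeq C_{20}$ — or (ii) exactly the cap $C_2^{(1)}$ obtained from $C_2$ by one growth step of Definition~\ref{def:f2} (adjunction of the standard ring of three hexagons).

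In case (ii) the disk $\overline{\partial P\setminus C_2^{(1)}}$ again has boundary $\partial C_2$ and carries $6$ pentagons; capping it with a fresh copy of $C_2$ produces a $3$-valent partition of $S^2$ into pentagons and hexagons, hence by Lemma~\ref{lem:ndf} the boundary of a fullerene $P'$ with $p_6(P')=p_6(P)-3$ and $C_2\subset\partial P'$. By the inductive hypothesis $P'\simeq F_{3(k-1)}$, and reversing the surgery realises $P$ as the result of the standard ring-adjunction at a $C_2$-cap of $F_{3(k-1)}$, that is $P\simeq F_{3k}$; a short extra argument shows the growth step at any $C_2$-cap of $F_{3(k-1)}$ yields $F_{3k}$ (the construction is symmetric in its two ends), and the finitely many fullerenes with small $p_6$ are checked directly.

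The main obstacle is step (ii): one must go edge by edge around $\partial C_2$ and eliminate every non-standard way in which a fullerene could continue beyond $C_2$, the only admissible constraints being that all facets are $5$- or $6$-gons, that adjacent pentagons are tightly restricted by the $5$-belt theorem, and that no $3$- or $4$-belts exist. This is exactly where the particular arrangement of the six pentagons of $C_2$ is used — a different $6$-pentagon disk admits other continuations, which is precisely why the two dodecahedral caps $C_1$ and $C_2$ generate the genuinely different families $D_{5k}$ and $F_{3k}$.
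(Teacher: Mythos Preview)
Your outline is correct, and the core strategy --- analyse how the fullerene continues across $\partial C_2$ and show that only two outcomes are possible --- is the paper's. The packaging differs, however. The paper does not induct on $p_6$ or perform surgery via Lemma~\ref{lem:ndf}; instead it isolates the crux as a self-contained result (Lemma~\ref{131313-lemma}): whenever a simple $6$-loop has $(1,3,1,3,1,3)$ edges on its boundary cycle --- which is exactly the combinatorial type of $\partial C_2$ --- the bordering $6$-loop on the far side is simple and either closes off as a second cap $C_2$, or consists of three hexagons whose new outer boundary again has the $(1,3,1,3,1,3)$ pattern. Iterating this lemma peels off rings of three hexagons until the cap appears, yielding $F_{3k}$ directly without ever leaving $P$. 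Your inductive surgery would work and needs precisely the same case analysis at the boundary, but the paper's formulation avoids constructing the auxiliary fullerene $P'$ and invoking Lemma~\ref{lem:ndf}. One small correction: the paper uses Theorem~\ref{5belts-theorem} only once, to rule out a spurious $5$-belt inside the bordering loop, not to ``force the corona'' from the surrounding $5$-belts of the pentagons of $C_2$ as you suggest; the real work is direct adjacency chasing together with the absence of $4$-belts.
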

\begin{proof} First introduce some notions.
\begin{definition}
Let $P$ be a simple polytope. A {\it $k$-loop} is a cyclic sequence $(F_{i_1},\dots,F_{i_k})$, $F_{i_{k+1}}=F_{i_1}$, of facets,  such that $F_{i_1}\cap F_{i_2}$, $\dots$, $F_{i_{k-1}}\cap F_{i_k}$, $F_{i_k}\cap F_{i_1}$ are edges. A $k$-loop is called {\it simple}, if its facets are pairwise different. A simple edge-cycle $\gamma$ {\it borders} a $k$-loop $\mathcal{L}$ if $\mathcal{L}$ is the set of facets that appears when we walk along $\gamma$ in one of the two connected components of $(\partial P)\setminus\gamma$. We say that an $l_1$-loop $\mathcal{L}_1$  {\it borders} an $l_2$-loop $\mathcal{L}_2$ (along $\gamma$), if they border the same edge-cycle $\gamma$. 
\end{definition}

\begin{lemma}
\label{131313-lemma}
Let $P$ be a fullerene, and a $6$-loop\linebreak $\mathcal{L}_1=(F_p,F_t,F_q,F_u,F_v,F_w)$
border a simple $6$-loop $\mathcal{L}_2=(F_i,F_j,F_k,F_l,F_r,F_s)$ with
$(1,3,1,3,1,3)$ edges on the boundary, as drawn on Fig.~\ref{fig:131313} a). Then
$\mathcal{L}_1$ is simple and either forms the fragment on
Fig.~\ref{fig:131313} b), or $F_p,F_q,F_v$ are hexagons and the $6$-loop
$\mathcal{L}_3=(F_j,F_q,F_l,F_v,F_s,F_p)$ is simple and has $(1,3,1,3,1,3)$ edges on
the boundary component intersecting $F_t$.
\end{lemma}
\begin{figure}
\includegraphics[width=0.4\textwidth]{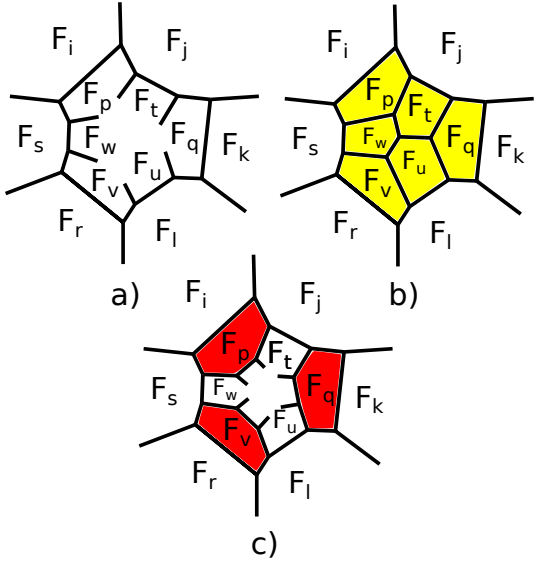}
\caption{Fragments on the fullerene}
\label{fig:131313}
\end{figure}

\begin{proof}
Let $\mathcal{L}_1$ and $\mathcal{L}_2$ border the same simple edge-cycle $\gamma$.

We have $F_p\ne F_q$, since they intersect $F_j$ by different edges. Also
$F_p\cap F_q=\varnothing$, else $F_p\cap F_q\cap F_j$ is a vertex and $F_j$
has only $2$ edges in $\gamma$, but not $3$. Similarly $F_p\cap
F_v=\varnothing$. Then $F_p\ne F_u$, else $F_p\cap F_q\ne\varnothing$. We
have $F_t\ne F_u$, else $F_q$ is a quadrangle. By the~symmetry we obtain that
the $6$-loop $\mathcal{L}_1$ is simple and $F_p\cap F_q=F_q\cap F_v=F_v\cap
F_q=\varnothing$.

Let $F_t\cap F_w\ne\varnothing$. Then $F_p\cap F_t\cap F_w$ is a vertex and
$F_p$ is a pentagon. We have a simple $5$-loop $\mathcal{L}_5=(F_t,F_q,F_u,F_v,F_w)$.
It is not a $5$-belt, else by Theorem \ref{5belts-theorem} it should either
surround a pentagon, and in this case $F_u$ is a quadrangle, or consist of hexagons
and have $(2,2,2,2,2)$ edges on each boundary component. But $F_u$ has
a single edge on the boundary component.  We have $F_t\cap
F_v\ne\varnothing$, else $F_t\cap F_w\cap F_v$ is a vertex and $F_w$ is a
quadrangle. Similarly $F_q\cap F_w=\varnothing$. Also recall that $F_q\cap
F_v=\varnothing$. Then either $F_t\cap F_u\ne\varnothing$, or $F_u\cap
F_w\ne\varnothing$. In the first case $(F_t, F_u,F_v,F_w)$ is a simple
$4$-loop with $F_t\cap F_v=\varnothing$. Since $P$ has no $4$-belts we have $F_u\cap F_w\ne\varnothing$. Then $F_u\cap F_w\cap F_v$, $F_u\cap F_w\cap F_t$, and $F_t\cap F_u\cap F_q$ are vertices and we obtain Fig.~\ref{fig:131313} b). In the
second case we obtain the same picture by the symmetry.

If $F_t\cap F_w=\varnothing$, then $F_p$ is a hexagon.

By the symmetry considering the facets $F_q$ and $F_v$ we obtain that either
all of them are pentagons, and we obtain Fig. \ref{fig:131313}b), or all
of them are hexagons, and we obtain Fig. \ref{fig:131313}c). Since
$F_p\cap F_q=F_q\cap F_v=F_v\cap F_w=\varnothing$, the boundary component of $\mathcal{L}_3$
intersecting $F_t$ is a simple edge-cycle. Since both sets $\{F_j,F_l,F_s\}$
and $\{F_q,F_v,F_p\}$ consist of pairwise different facets and these sets
belong to different connected components with respect to $\gamma$, the
$6$-loop $\mathcal{L}_3$ is simple. It has $(1,3,1,3,1,3)$ edges on the boundary
component intersecting $F_t$.
\qed
\end{proof}
Let $P$ contain the fragment on Fig. \ref{fig:131313}b). Consider the
$6$-loop $\mathcal{L}=(F_p,F_t,F_q,F_u,F_v,F_w)$. The facets $\mathcal{L}\setminus\{F_v\}$ are
pairwise different, since four of them surround the fifth. The same is for
$\mathcal{L}\setminus\{F_p\}$ and $\mathcal{L}\setminus\{F_q\}$. Any two facets of $\mathcal{L}$ belong
to one of these sets, therefore $\mathcal{L}$ is a simple loop. $F_p\cap
F_q=\varnothing$, else $F_p\cap F_q\cap F_t$ is a vertex and $F_t$ is a
quadrangle. Similarly $F_q\cap F_v=F_v\cap F_p=\varnothing$. Then
$\partial \mathcal{L}$ is a simple edge-cycle, and $\mathcal{L}$ has $(1,3,1,3,1,3)$ edges on
it. Applying Lemma \ref{131313-lemma} to $\mathcal{L}$ we obtain the proof of the
theorem.\qed
\end{proof}
\begin{remark} We also have proved that any fragment of the form $C_2$ is bounded by a simple edge-cycle, hence is a patch.
\end{remark}
\paragraph{Construction of fullerenes without quadrangles on the steps}
\label{par:mr}
\begin{theorem}
\label{th:trw4}
Any polytope  in $\mathcal{F}\sqcup \mathcal{F}_1$ can be combinatorially obtained from the dodecahedron by operations $A_1$ -- $A_7$ (see Fig. \ref{fig:7op2}) such that all intermediate polytopes belong to $\mathcal{F}\sqcup \mathcal{F}_1$.  Moreover,
\begin{enumerate}
\item any polytope in $\mathcal{F}\setminus\mathcal{F}^{IPR}$ can be obtained from a polytope in $\mathcal{F}$ by one of  operations $A_1$ -- $A_4$;
\item any polytope in $\mathcal{F}^{IPR}$ can be obtained from a polytope in $\mathcal{F}_1$ by operation $A_6$;
\item any polytope in $\mathcal{F}_1$ can be obtained by operations $A_5$ or $A_7$ from a polytope in $\mathcal{F}$ or $\mathcal{F}_1$ respectively; 
\item operations $A_1$ --  $A_3$ are compositions of $(1;4,5)$-, \linebreak $(1;5,5)$-, and $(2,6;4,5)$-truncations;
\item operations  $A_4$, $A_5$, $A_6$, $A_7$ are $(2,6;5,5)$-, $(2,6;5,6)$-, $(2,7;5,5)$-, and $(2,7;5,6)$-truncations respectively.

\end{enumerate}
\end{theorem}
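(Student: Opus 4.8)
The plan is to bootstrap from Theorem~\ref{th:fultr}, which already does the hard constructive work. For $P\in\mathcal{F}\sqcup\mathcal{F}_1$ it gives a sequence of the seven truncations from $C_{20}$ with all intermediate polytopes in $\mathcal{F}_s=\mathcal{F}_{-1}\sqcup\mathcal{F}\sqcup\mathcal{F}_1$, and it records (items 1--3, Fig.~\ref{fig:trgraph}) which truncation joins which classes. Reading off that data, such a sequence is an alternation of two kinds of blocks: single truncations between polytopes of $\mathcal{F}\sqcup\mathcal{F}_1$ --- which can only be $(2,6;5,5)$ inside $\mathcal{F}$, $(2,6;5,6)$ from $\mathcal{F}$ to $\mathcal{F}_1$, $(2,7;5,6)$ inside $\mathcal{F}_1$, and $(2,7;5,5)$ from $\mathcal{F}_1$ to $\mathcal{F}$, so we simply name these $A_4,A_5,A_7,A_6$ (this is item 5) --- and maximal ``excursions'' into $\mathcal{F}_{-1}$, each of the shape $(1;5,5)$, then $(1;4,5)$ repeated $n\ge0$ times, then $(2,6;4,5)$, which begin and end in $\mathcal{F}$ (an excursion must close, since $P\notin\mathcal{F}_{-1}$ and $\mathcal{F}_{-1}$ is joined only to $\mathcal{F}$). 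So the theorem reduces to showing that the growth operation performed by one such excursion on the polytope of $\mathcal{F}$ at its left end is, after breaking it at suitable returns to $\mathcal{F}$, a composition of three fixed growth operations $A_1,A_2,A_3$, each itself a short composition of $(1;4,5)$-, $(1;5,5)$- and $(2,6;4,5)$-truncations (item 4); items 1--3 then follow by matching source and target classes with Theorem~\ref{th:fultr} and using the defining properties of $\mathcal{F}_1$.

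To analyse a single $\mathcal{F}_{-1}$-excursion I would exploit the rigidity of Theorem~\ref{4belts-theorem}: every polytope of $\mathcal{F}_{-1}$ has exactly one $4$-belt and it surrounds the quadrangle. Hence during the excursion the quadrangle always carries a unique cyclic collar of four facets: the opening $(1;5,5)$-truncation creates the quadrangle together with this collar out of an edge of a polytope in $\mathcal{F}$ shared by two pentagons; each subsequent $(1;4,5)$-truncation slides the quadrangle one facet forward inside a ``band'', leaving a new hexagon behind; and the closing $(2,6;4,5)$-truncation absorbs the quadrangle and returns to $\mathcal{F}$. Thus an excursion is just the quadrangle walking through a band whose shape is forced by the collar. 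The remaining point is to bound, up to subdivision, the distance the quadrangle must travel before a $(2,6;4,5)$-truncation is again available: I would show that any long excursion can be interrupted by a $(2,6;4,5)$-truncation back into $\mathcal{F}$ and then restarted, so that every excursion splits into excursions of bounded length, and that the only ones actually occurring are the short excursion of length three in generic position, the three-hexagon insertion of Fig.~\ref{fig:op2}, and the five-belt insertion of Fig.~\ref{fig:dkb}; these are $A_1$, $A_3$, $A_2$. Here Theorem~\ref{5belts-theorem} (a $5$-belt of hexagons forces a $D_{5k}$, hence Proposition~\ref{prop:f1}) together with Lemma~\ref{131313-lemma} and Theorem~\ref{prop:f2} (to recognise the cap $C_2$) pin down which case occurs, while Corollary~\ref{cor:ful} and Lemma~\ref{lem:ndf} ensure that the straightenings used in a subdivision are defined and keep us inside $\mathcal{F}_s$.

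Once the excursions are reduced to $A_1,A_2,A_3$, the rest is bookkeeping. Each of $A_1,A_2,A_3$, and also $A_4$, acts inside $\mathcal{F}$ and produces a pentagon adjacent to a pentagon (the closing $(2,6;4,5)$-, respectively $(2,6;5,5)$-, truncation does this), so their images lie in $\mathcal{F}\setminus\mathcal{F}^{IPR}$ --- item 1; $A_5=(2,6;5,6)$ sends $\mathcal{F}$ into $\mathcal{F}_1$ and $A_7=(2,7;5,6)$ keeps $\mathcal{F}_1$ --- item 3; and $A_6=(2,7;5,5)$ sends $\mathcal{F}_1$ to $\mathcal{F}$, landing in $\mathcal{F}^{IPR}$ by the way $\mathcal{F}_1$ is defined --- item 2. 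Since the excursions never meet $\mathcal{F}_1$ and $A_4$--$A_7$ never meet $\mathcal{F}_{-1}$, all polytopes arising between consecutive applications of $A_1,\dots,A_7$ lie in $\mathcal{F}\sqcup\mathcal{F}_1$, which completes the argument.

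The main obstacle is the middle step: proving that arbitrarily long $\mathcal{F}_{-1}$-excursions cut into pieces of bounded length realising only three distinct growth operations. This is a genuine combinatorial statement about how a quadrangle with its forced $4$-belt collar can propagate across a fullerene-like sphere, and it is exactly where the belt theorems~\ref{4belts-theorem} and~\ref{5belts-theorem}, the nanotube characterisations (Proposition~\ref{prop:f1}, Theorem~\ref{prop:f2}) and Lemma~\ref{131313-lemma} have to be combined; keeping track of which of $A_1,A_2,A_3$ applies in each local configuration, and checking that an interrupted excursion really lands in $\mathcal{F}$ rather than with two quadrangles or outside $\mathcal{F}_s$, is the delicate part.
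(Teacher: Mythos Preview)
Your approach is genuinely different from the paper's and, as you yourself flag, the crucial step is left open. The paper does \emph{not} post-process the sequence supplied by Theorem~\ref{th:fultr}. Instead it proves a separate patch lemma (Lemma~\ref{lem:55}): any fullerene with two adjacent pentagons contains one of four specific patches $C_1$, $C_2$, $P_1$, $P_2$. Then item~1 is proved by direct reduction on the target polytope: patch $C_1$ or $C_2$ forces the nanotube families $D_{5k}$ or $F_{3k}$ (via Proposition~\ref{prop:f1} and Theorem~\ref{prop:f2}), giving $A_1$ or $A_2$; patch $P_1$ gives $A_4$; patch $P_2$ gives $A_3$. Item~2 is handled by citing an external argument (Theorem~9.12 of \cite{BE16}), and item~3 is read off Theorem~\ref{th:fultr}. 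The theorem then follows by induction on $p_6$. No excursion analysis is needed.

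Your route has two substantive gaps. First, the decomposition of an arbitrary $\mathcal{F}_{-1}$-excursion into copies of $A_1,A_2,A_3$ is not established, and it is not clear it can be: $A_1$ and $A_2$ are not generic growth operations but belt insertions that require the caps $C_1$, $C_2$ to be present, so they are only available on polytopes in the nanotube families; an excursion based at a fullerene outside those families cannot be cut into pieces of type $A_1$ or $A_2$. Interrupting an excursion by an ad hoc $(2,6;4,5)$ and then reopening with $(1;5,5)$ changes the polytope under construction, so you would have to exhibit a \emph{new} sequence that still terminates at the same $P$, which is essentially proving the theorem from scratch. Second, your derivation of items~1--3 is in the wrong direction: you argue that $A_1$--$A_4$ \emph{produce} non-IPR fullerenes and $A_6$ \emph{produces} fullerenes, but item~1 asks that every non-IPR fullerene \emph{arise} from $\mathcal{F}$ via $A_1$--$A_4$, and nothing in your outline excludes that the last step into a non-IPR fullerene in the Theorem~\ref{th:fultr} sequence is $A_6$ from $\mathcal{F}_1$. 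The paper sidesteps all of this by reducing directly on the target via the patch lemma.
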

\begin{figure}
\includegraphics[width=0.4\textwidth]{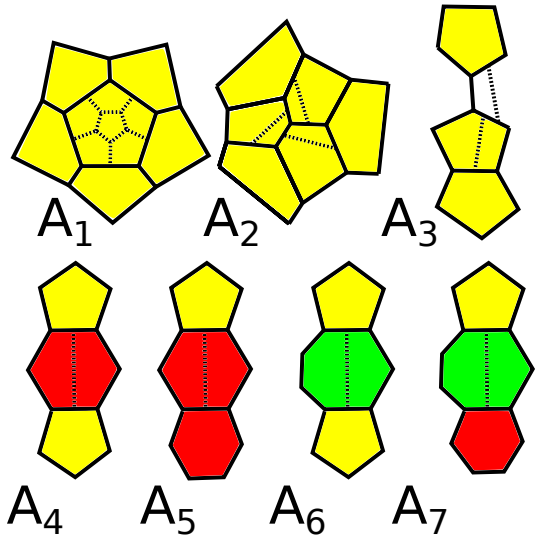}
\caption{Seven operations sufficient to construct any polytope in $\mathcal{F}\sqcup \mathcal{F}_1$ from the dodecahedron. Dotted lines correspond to edges on the resulting polytope}
\label{fig:7op2}       
\end{figure}
\begin{proof} We will need the following result.
\begin{lemma}\label{lem:55} If a fullerene $P$ has two adjacent pentagons, then either it contains patch $C_1$, or patch $C_2$, or one of the patches drawn on Fig. \ref{fig:65556}.
\end{lemma}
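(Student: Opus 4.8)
The plan is to start from two adjacent pentagons $F_1, F_2$ sharing an edge $e$ and study the local configuration forced around them by the belt structure of fullerenes. By Theorem \ref{5belts-theorem}, each of $F_1$ and $F_2$ is surrounded by a $5$-belt; denote these $\mathcal B_1$ and $\mathcal B_2$. Since $F_1$ and $F_2$ are adjacent, $\mathcal B_1$ contains $F_2$ and $\mathcal B_2$ contains $F_1$, and the two belts share the two facets incident to the endpoints of $e$. The union of $F_1\cup F_2$ together with $\mathcal B_1\cup\mathcal B_2$ is a patch whose boundary is a simple edge-cycle; I would first verify this using Theorem \ref{4belts-theorem} (no $4$-belts when there are no quadrangles, so the loops involved cannot collapse) together with Lemma \ref{lem:ndf}/Corollary \ref{cor:ndf} to guarantee we stay within genuine simple polytopes.

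Next I would do the case analysis according to how many of the facets of $\mathcal B_1\cup\mathcal B_2$ surrounding the pentagon pair are themselves pentagons, and how they are positioned relative to the two ``poles'' of $e$. The key combinatorial tool is exactly the loop-bordering argument used in the proof of Theorem \ref{prop:f2}: whenever a short loop ($4$- or $5$-loop) appears around the configuration, Theorem \ref{4belts-theorem} and part II of Theorem \ref{5belts-theorem} force it either to surround a pentagon (producing a new adjacent-pentagon pair and pushing the analysis one ring outward) or to consist of hexagons with the rigid $(2,2,2,2,2)$ boundary pattern. Tracking this ring by ring, the configuration is pinned down: if the outward growth is ``symmetric'' around $e$ we are driven into the cap $C_1$ (the dodecahedral cap of the $(5,0)$-nanotube family), and if it is ``staggered'' we are driven into $C_2$ (the cap of the $(3,3)$-nanotube family); the remaining genuinely local possibility, where the hexagon ring closes off early, yields the fragments on Fig. \ref{fig:65556}.

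The main obstacle I expect is controlling the degenerate cases in this ring-by-ring propagation: facets that a priori look distinct in a loop may coincide, or a loop that should be a belt may fail to be one. Handling these requires repeated, careful application of Theorem \ref{4belts-theorem} (to rule out $4$-belts in the quadrangle-free setting) and of Theorem \ref{5belts-theorem}.II (to classify any genuine $5$-belt that appears), exactly in the spirit of Lemma \ref{131313-lemma}. Once those non-degeneracy checks are in place, the classification into ``$C_1$, $C_2$, or Fig. \ref{fig:65556}'' follows by exhausting the finitely many ways the second and third rings of facets around $F_1\cup F_2$ can be filled with pentagons and hexagons subject to the $p_5=12$ constraint and simplicity.
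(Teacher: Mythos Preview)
Your plan is substantially more elaborate than what the lemma requires, and several of the tools you reach for are irrelevant here. The paper's proof is a direct, two--layer local case analysis: look at the two facets $F_k,F_l$ meeting the endpoints of the common edge of the adjacent pentagons; if both are hexagons you already have patch $P_1$. Otherwise one of them is a pentagon, giving three pentagons at a vertex, and you simply inspect the next three surrounding facets (and in one subcase one more facet). Each of the finitely many pentagon/hexagon assignments immediately yields $C_1$, $C_2$, $P_1$, or $P_2$. No belt propagation, no Lemma~\ref{131313-lemma}--style loop arguments, no use of the $p_5=12$ count, and certainly no appeal to Lemma~\ref{lem:ndf} or Corollary~\ref{cor:ndf} (those concern straightening, not recognition of patches). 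The only structural input is that fullerenes have no $3$- or $4$-belts, used once at the start to say that any fragment of shape $C_1$, $C_2$, $P_1$, $P_2$ is actually bounded by a simple cycle and hence a patch.

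Your ring--by--ring scheme could in principle be pushed through, but your description of \emph{which} outcome arises in which case is not right: $P_1$ is not a situation where ``the hexagon ring closes off early'' --- it is the very first and simplest branch (both ends of $e$ meet hexagons); and the $C_1$/$C_2$ dichotomy is governed by how many of the three facets around the triple--pentagon vertex are pentagons, not by a symmetric--versus--staggered growth pattern around $e$. So as written the proposal is a plan whose case structure does not match the actual combinatorics; before it could be a proof you would have to redo the case split, at which point you would recover the paper's short argument.
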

\begin{figure}
\includegraphics[width=0.4\textwidth]{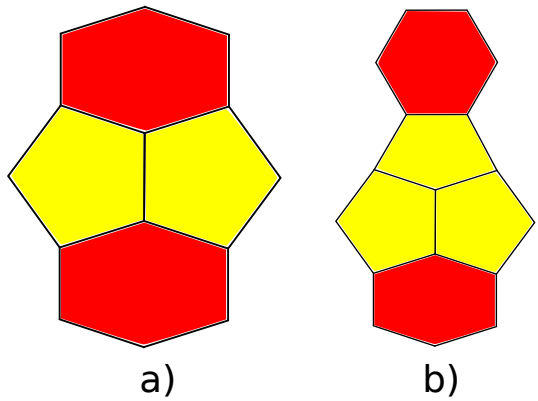}
\caption{a) Patch $P_1$; b) Patch $P_2$}
\label{fig:65556}       
\end{figure}
\begin{proof}
Since any fullerene has no $3$- and $4$-belts, any fragment on a fullerene of the form $C_1$, $C_2$, $P_1$, or $P_2$ is bounded by a simple edge-cycle; hence it is a patch. 

By assumption $P$ contains two adjacent pentagons $F_i$ and $F_j$. 
Consider the facets $F_k$, $F_l$ such that $F_i\cap
F_j\cap F_k$ and $F_i\cap F_j\cap F_l$ are vertices. If both facets $F_k$,
$F_l$ are hexagons, then $P$ contains the fragment $P_1$. Else one of them is a
pentagon. Without loss of generality let it be $F_k$. We obtain three
pentagons with a common vertex, as drawn on Fig.~\ref{fig:NIP} a).

If $F_p$, $F_q$, $F_l$ are pentagons, we obtain the fragment $C_2$.

Let two of them, say $F_p$ and $F_q$, be hexagons. Then either $F_u$ is a
pentagon and we obtain the fragment $P_1$, or $F_u$ is a hexagon, and we obtain  Fig. \ref{fig:NIP} b). 
If $F_v$, $F_l$, or $F_w$ is a hexagon, we obtain the
fragment $P_2$. If all of them are pentagons, we obtain the fragment $C_2$.

Now let one of the facets $F_p$, $F_q$, $F_l$, say $F_l$, be a hexagon, and
two others -- pentagons, as drawn on Fig. \ref{fig:NIP} c). Then either $F_u$ is a
hexagon and we obtain the fragment $P_2$, or it is a pentagon and we obtain the
fragment $C_1$.
\begin{figure}
\includegraphics[width=0.4\textwidth]{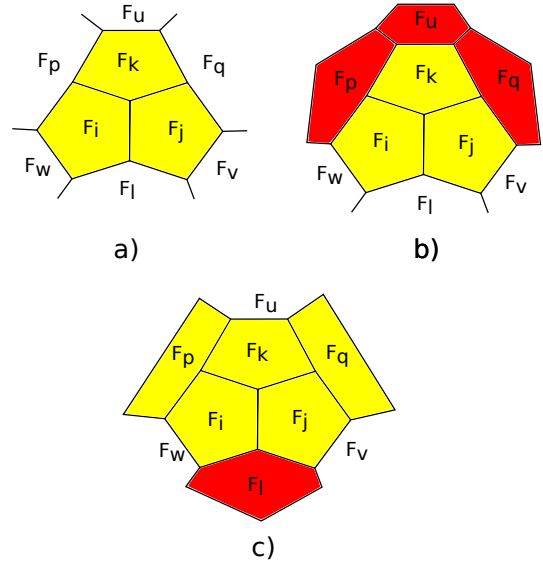}
\caption{Fragments on a fullerene with adjacent pentagons}
\label{fig:NIP}       
\end{figure}\qed
\end{proof}
If a fullerene $P$ has two adjacent pentagons, then by Lemma \ref{lem:55} it contains one of the patches $C_1$, $C_2$, $P_1$ or $P_2$. If $P$ contains patch $C_1$ or $C_2$, then by Proposition \ref{prop:f1} or Theorem \ref{prop:f2} it is contained in the family $D_{5k}$ or $F_{3k}$ respectively, hence can be obtained from the dodecahedron by operations $A_1$ or $A_2$. Fig. \ref{fig:dkb} and Fig. \ref{fig:op2} show how operations $A_1$ and $A_2$ are decomposed into $(1;4,5)$-, $(1;5,5)$-, and $(2,6;4,5)$-truncations. 

If $P$ contains patch $P_1$, then by Corollary \ref{cor:ful} $P$ is obtained from some other fullerene $Q$ by operation $A_4$, which is a $(2,6;5,5)$-truncation.

Fig. \ref{fig:a4d} shows how operation $A_3$ decomposes into $(1;5,5)$- and $(2,6;4,5)$-truncations. If $P$ contains patch $P_2$, then applying the corresponding inverse operations by Corollary  \ref{cor:ful} we see that the fullerene is obtained from some other fullerene by operation $A_3$.

If a fullerene $P$ has no adjacent pentagons, then the proof of Theorem 9.12 in \cite{BE16} implies that $P$ is obtained from a polytope in $\mathcal{F}_1$ by operation $A_6$, which is a $(2;7;5,5)$-truncation.

Theorem \ref{th:fultr} implies that any polytope in $\mathcal{F}_1$ can be obtained by operations  $A_5$ and $A_7$, from a fullerene or a polytope in $\mathcal{F}_1$ respectively. Here $A_5$ is a $(2,6;5,6)$-truncation, and $A_7$ is a $(2,7;5,6)$-truncation.
\begin{figure}
\includegraphics[width=0.4\textwidth]{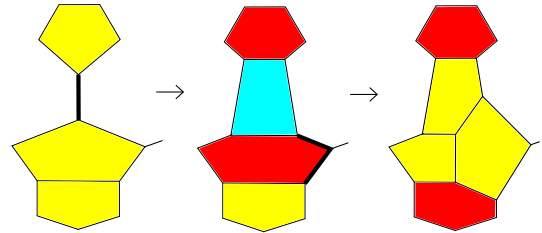}
\caption{Decomposition of operation $A_3$ into truncations}
\label{fig:a4d}       
\end{figure}\qed
\end{proof}
\subsection{Singular $IPR$-fullerenes}
\label{subsec:sipr}
\begin{theorem}
\label{th:iptr}
Any polytope in $\mathcal{F}\sqcup \mathcal{F}_1^{IPR}$ can be combinatorially obtained from the dodecahedron by operations $A_1$ -- $ A_4$, $A_6$, $A_7$, $B_1$ -- $B_5$ (Fig. \ref{fig:11op}) in such a way that all intermediate polytopes belong to $\mathcal{F}\sqcup \mathcal{F}_1^{IPR}$. Moreover,
\begin{enumerate}
\item any polytope in $\mathcal{F}\setminus\mathcal{F}^{IPR}$ can be obtained from a polytope in $\mathcal{F}$ by one of  operations $A_1$ -- $A_4$;
\item any polytope in $\mathcal{F}^{IPR}$ can be obtained by operations $B_1$ or  $B_3$ from a polytope in $\mathcal{F}$, or by operation $A_6$   from a polytope in $\mathcal{F}_1^{IPR}$;
\item any polytope in $\mathcal{F}_1^{IPR}$ can be obtained by operations $B_2$, $B_4$, or $B_5$ from a polytope in $\mathcal{F}$, or by operation $A_7$ from a polytope in $\mathcal{F}_1^{IPR}$;  
\item operations $A_1$ --  $A_3$ are compositions of  $(1;4,5)$-,\linebreak $(1;5,5)$-, and  $(2,6;4,5)$-truncations;
\item operations  $A_4$, $A_6$, $A_7$ are $(2,6;5,5)$-, $(2,7;5,5)$-, and $(2,7;5,6)$-truncations respectively;
\item operations $B_1$ -- $B_5$ are compositions of $(2,6;5,6)$-,\linebreak $(2,7;5,5)$-, and $(2,7;5,6)$-truncations.
\end{enumerate}
\end{theorem}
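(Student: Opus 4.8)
The plan is to induct on the complexity $m(P)=p_6(P)+2p_7(P)$. For $P\in\mathcal{F}\sqcup\mathcal{F}_1^{IPR}$ one has $p_4=0$, $p_7\le 1$, and $m(P)=0$ precisely when $P\simeq C_{20}$; moreover each of the eleven operations $A_1$--$A_4$, $A_6$, $A_7$, $B_1$--$B_5$ strictly increases $m$, by a direct computation from the truncation decompositions in items (4)--(6). So it suffices to show that every $P\in\mathcal{F}\sqcup\mathcal{F}_1^{IPR}$ with $m(P)>0$ is obtained from some $Q$ in the same family with $m(Q)<m(P)$ by one of the eleven operations, in the manner prescribed by (1)--(3). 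Throughout, every straightening we use is well defined, by Corollary \ref{cor:ful} (and, for the intermediate steps inside the truncation decompositions, also by Corollary \ref{cor:ndf}, each intermediate sphere partition being glued from pentagons, hexagons and at most one quadrangle or heptagon); the ``no short belt'' facts are Theorems \ref{th:3belts} and \ref{4belts-theorem}.

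First I would dispose of the case $P\in\mathcal{F}\setminus\mathcal{F}^{IPR}$ (item (1)), which is handled exactly as the corresponding case of Theorem \ref{th:trw4}: Lemma \ref{lem:55} places in $P$ one of the patches $C_1$, $C_2$, $P_1$, $P_2$; in the first two cases Proposition \ref{prop:f1} or Theorem \ref{prop:f2} identifies $P$ with a member of a nanotube family $D_{5k}$ or $F_{3k}$, and $P$ is obtained from its predecessor (a fullerene) by $A_1$ or $A_2$; in the last two cases the relevant inverse straightenings are defined and, by a face-count check, yield a fullerene $Q$, so $P$ comes from $Q$ by $A_4$ or $A_3$. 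Since $A_1$--$A_4$ create no heptagon, all polytopes arising here remain in $\mathcal{F}$.

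The substance of the theorem is items (2) and (3), where we must stay inside $\mathcal{F}\sqcup\mathcal{F}_1^{IPR}$ and hence may not pass through the polytopes of $\mathcal{F}_1\setminus\mathcal{F}_1^{IPR}$ that Theorem \ref{th:trw4} is free to visit. I would prove two analogues of Lemma \ref{lem:55}: that any $P\in\mathcal{F}^{IPR}$ with $m(P)>0$ contains, around a suitable pentagon, one of a finite list of explicit fragments, each being the target patch of $B_1$ or $B_3$ (applied to a fullerene) or of $A_6$ (applied to a polytope of $\mathcal{F}_1^{IPR}$); and that any $P\in\mathcal{F}_1^{IPR}$ contains, around the unique heptagon, one of a finite list of explicit fragments, each being the target patch of $B_2$, $B_4$ or $B_5$ (applied to a fullerene) or of $A_7$ (applied to a polytope of $\mathcal{F}_1^{IPR}$). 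These fragments are exactly those pictured in Fig. \ref{fig:11op}, and the two lists arise from a local analysis near the relevant pentagon (respectively near the heptagon and its adjacent pentagon, whose existence is built into the definition of $\mathcal{F}_1$), using the $5$-belt structure of Theorem \ref{5belts-theorem}, the $4$-belt description of Theorem \ref{4belts-theorem}, the absence of $3$-belts, and the two alternatives in the definition of $\mathcal{F}_1$. Given the lists, for each fragment one exhibits the inverse operation --- a chain of straightenings, admissible by Corollaries \ref{cor:ful} and \ref{cor:ndf} --- realizing $P$ from a polytope $Q\in\mathcal{F}\sqcup\mathcal{F}_1^{IPR}$ with $m(Q)<m(P)$, checks that it coincides with the corresponding $A_6$, $A_7$ or $B_i$ of Fig. \ref{fig:11op}, and reads off the decompositions claimed in items (4)--(6). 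A practical route to both the finiteness and the identification of the $B_i$ is to start from the reduction supplied by Theorem \ref{th:trw4} (equivalently, from the argument of Theorem~9.12 of \cite{BE16}): $P$ is obtained from a polytope of $\mathcal{F}_1$ by $A_6$, or from a fullerene by $A_5=(2,6;5,6)$-truncation, or from a polytope of $\mathcal{F}_1$ by $A_7$; whenever the intermediate $\mathcal{F}_1$-polytope fails to be $IPR$, one short further detour --- one more $(2,7;5,6)$-truncation and, in the $B_1$, $B_3$ cases, a concluding $(2,7;5,5)$-truncation --- returns to $\mathcal{F}$ or to $\mathcal{F}_1^{IPR}$, and the resulting composite is precisely one of $B_1,\dots,B_5$.

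The main obstacle is this finite case analysis; the rest is bookkeeping for the induction. Two points are delicate. First, one must show that the failure of the $IPR$ condition can always be confined to a bounded patch attached to (or containing) the unique heptagon --- the two-case structure in the definition of $\mathcal{F}_1$ is exactly what keeps the list of fragments finite, just as the patches $C_1,C_2,P_1,P_2$ do for Lemma \ref{lem:55}. Second, for each fragment one must verify that the chain of straightenings inverting the corresponding operation is defined at every intermediate step --- this is where Corollary \ref{cor:ndf}, that is, Lemma \ref{lem:ndf}, is used --- and that the intermediate polytopes at the level of the eleven operations never lie in $\mathcal{F}_{-1}$ or $\mathcal{F}_1\setminus\mathcal{F}_1^{IPR}$. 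Once the two analogues of Lemma \ref{lem:55} are in hand the induction closes and statements (1)--(3) follow.
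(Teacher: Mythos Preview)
Your outline is broadly aligned with the paper's argument: induction on $m(P)=p_6+2p_7$, item~(1) inherited verbatim from Theorem~\ref{th:trw4}, and the real work placed in a finite local case analysis for items~(2) and~(3). For $P\in\mathcal{F}_1^{IPR}$ your plan in fact matches the paper closely: one straightens the pentagon--heptagon edge (inverse of $A_7$), lands in some $Q\in\mathcal{F}_1$, and if $Q\notin\mathcal{F}_1^{IPR}$ exactly one of three nearby facets $F_u,F_v,F_w$ is a pentagon, yielding after further straightenings a fullerene and the operations $B_2,B_4,B_5$ respectively.

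Where your proposal is thinner than the paper is the $\mathcal{F}^{IPR}$ case. You suggest starting from the Theorem~\ref{th:trw4}/\cite[Th.~9.12]{BE16} reduction and ``detouring'' whenever the intermediate $\mathcal{F}_1$-polytope fails $IPR$; but you do not say why the detour is bounded, and in fact the paper does not organize the proof this way. The paper's device is a clean trichotomy on how near two pentagons of the $IPR$-fullerene can be: (i) some edge has both endpoints on pentagons, which forces a specific patch and hence operation $B_1$; (ii) no such edge but some hexagon meets two pentagons on opposite edges, which (using Theorem~\ref{5belts-theorem}) forces another explicit patch and hence $B_3$; (iii) no hexagon is adjacent to two pentagons, so every pentagon is isolated by a hexagonal belt and a single straightening lands in $\mathcal{F}_1^{IPR}$, giving $A_6$. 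This trichotomy is the missing idea in your sketch; without it the ``finite list of fragments'' and the boundedness of the detour are asserted rather than shown. Once you articulate it, the rest of your plan goes through exactly as you describe.
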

\begin{figure}
\includegraphics[width=0.4\textwidth]{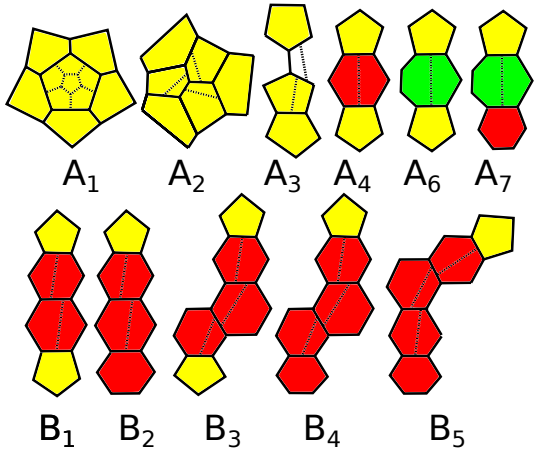}
\caption{Eleven operations sufficient to construct any polytope in $\mathcal{F}\sqcup \mathcal{F}_1^{IPR}$ from the dodecahedron. Dotted lines correspond to edges of the resulting polytope}
\label{fig:11op}       
\end{figure}
\begin{proof}
The statement 1. is a part of Theorem \ref{th:trw4}.

To prove the statement 2. take an $IPR$-fullene $P$. Consider $3$ cases.

{\it Case 1.} $P$ has an edge intersecting by vertices two pentagons. Since $P$ has no $3$- and $4$-belts, it contains the patch on Fig. \ref{fig:5e5} a). Then Fig. \ref{fig:5e5} b)--c) show how this patch can be reduced by straightening along edges. Then composition of the inverse operations gives operation $B_1$.
\begin{figure}
\includegraphics[width=0.4\textwidth]{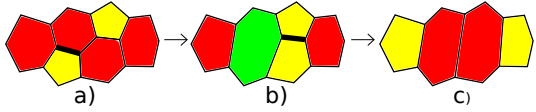}
\caption{a) A patch on the fullerene; b)--c) it's reductions}
\label{fig:5e5}       
\end{figure}

{\it Case 2.} $P$ has no edge intersecting two pentagons, but has a hexagon adjacent to two pentagons by opposite edges. The absence of $3$- and $4$-belts, and the structure of $5$-belts given by Theorem \ref{5belts-theorem} implies that $P$ contains the patch on Fig. \ref{fig:5s5} a). Then Fig. \ref{fig:5s5} b)-d) show how this patch can be reduced by straightening along edges. Then composition of the inverse operations gives operation $B_3$.
\begin{figure}
\includegraphics[width=0.4\textwidth]{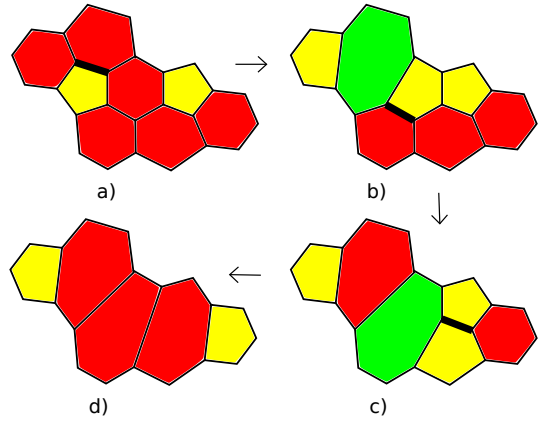}
\caption{a) A patch on the fullerene; b)-d) it's reduction}
\label{fig:5s5}       
\end{figure}

{\it Case 3.} $P$ has no hexagons adjacent to two pentagons.  Then any pentagon is surrounded by a belt of hexagons  intersecting no other pentagons. Then any straightening along common edge of a pentagon and a hexagon gives a polytope in  $\mathcal{F}_1^{IPR}$ such that $P$ is obtained from it by operation $A_6$, which is a $(2,7;5,5)$-truncation. 

Now consider a polytope $P\in\mathcal{F}_1^{IPR}$ and a pentagon adjacent to the heptagon, as drawn on Fig. \ref{fig:75} a). Straightening the edge we obtain Fig. \ref{fig:75} b). Then $P$ is obtained from  the resulting polytope $Q$ by  operation $A_7$.  The polytope $Q$ does not belong to $\mathcal{F}^{IRP}_1$ if and only if one of the facets $F_u$, $F_v$, or $F_w$ is a pentagon.  
\begin{figure}
\includegraphics[width=0.4\textwidth]{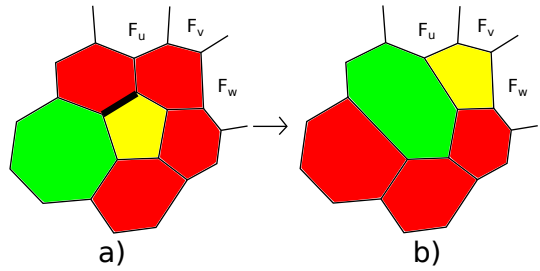}
\caption{a) A patch on the polytope in $\mathcal{F}_1^{IPR}$,  b) it's reduction}
\label{fig:75}       
\end{figure}
If $F_u$ us a pentagon, then since $P$ has no $3$- and $4$-belts, we obtain the patch drawn on Fig. \ref{fig:751} a). Applying the straightenings as drawn on Fig. \ref{fig:751} b)--c), we obtain a fullerene $Q$ such that $P$ is obtained from $Q$ by operation $B_2$. 

\begin{figure}
\includegraphics[width=0.4\textwidth]{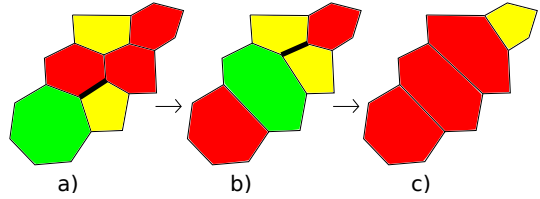}
\caption{a) A patch on the polytope in $\mathcal{F}_1^{IPR}$,  b)-c) it's reductions}
\label{fig:751}       
\end{figure}

If $F_v$ is a pentagon, then since $P$ has no $3$- and $4$-belts, we obtain the fragment drawn on Fig. \ref{fig:752} a). It is a patch if and only if $F_i\cap F_j=\varnothing$. Applying the straightenings as drawn on Fig. \ref{fig:752} b)--d), we obtain a fullerene $Q$.  Theorem \ref{5belts-theorem} implies that $F_i\cap F_j=\varnothing$; hence the same if true for  $P$. Thus, we have a patch, and the polytope $P$ is obtained from $Q$ by operation $B_4$. 
\begin{figure}
\includegraphics[width=0.4\textwidth]{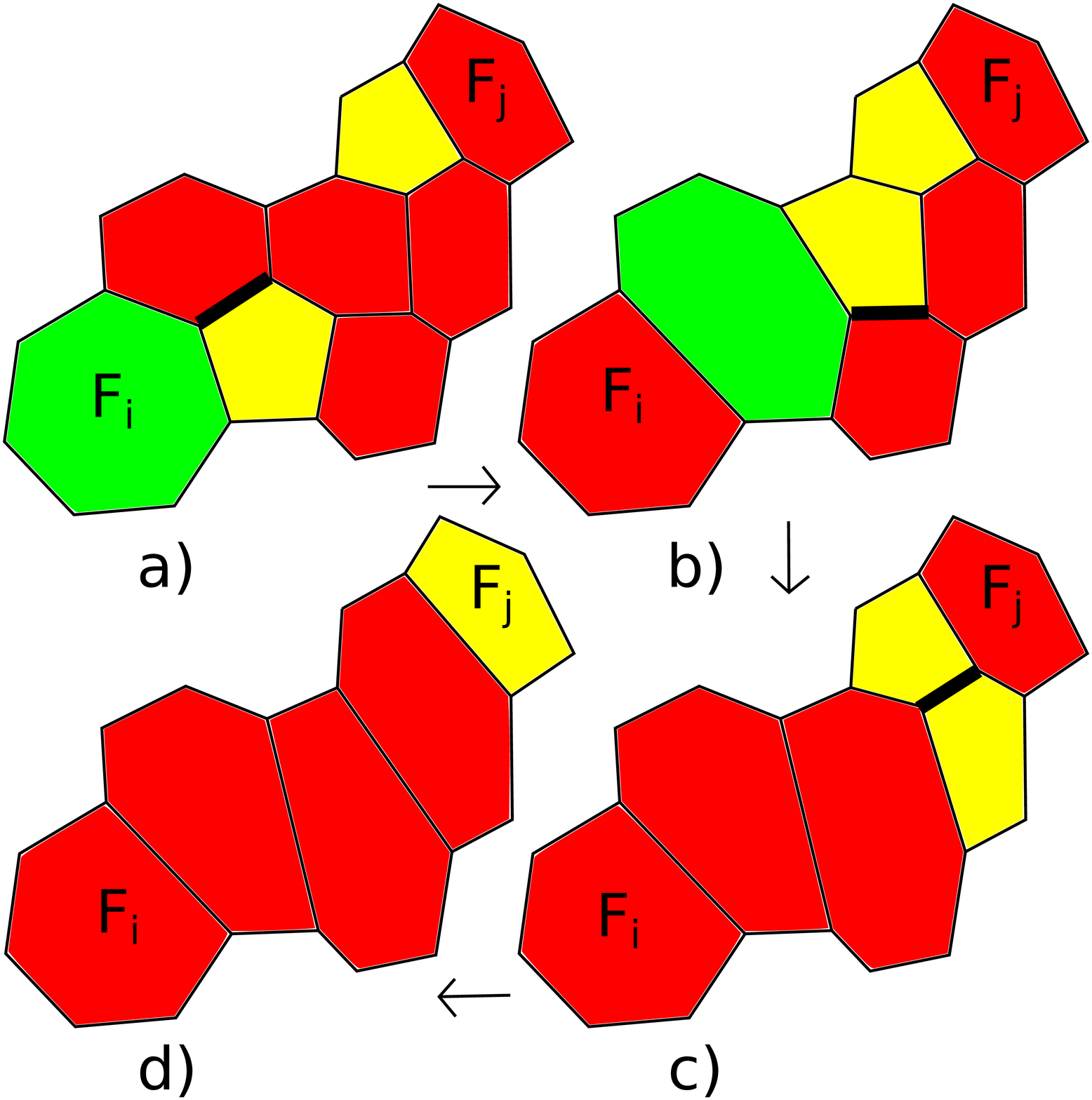}
\caption{a) A patch on the polytope in $\mathcal{F}_1^{IPR}$,  b)-d) it's reductions}
\label{fig:752}       
\end{figure}

If $F_v$ is a hexagon and $F_w$ is a pentagon, then since $P$ has no $3$- and $4$-belts, we obtain the fragment drawn on Fig. \ref{fig:753} a). It is a patch if and only if $F_i\cap F_j=\varnothing$. Applying the straightenings as drawn on Fig. \ref{fig:753} b)--d), we obtain a fullerene $Q$.  Theorem \ref{5belts-theorem} implies that $F_i\cap F_j=\varnothing$ in $Q$; hence the same if true for  $P$. Thus, we have a patch, and  the polytope $P$ is obtained from $Q$ by operation $B_5$.  This finishes the proof.
\begin{figure}
\includegraphics[width=0.4\textwidth]{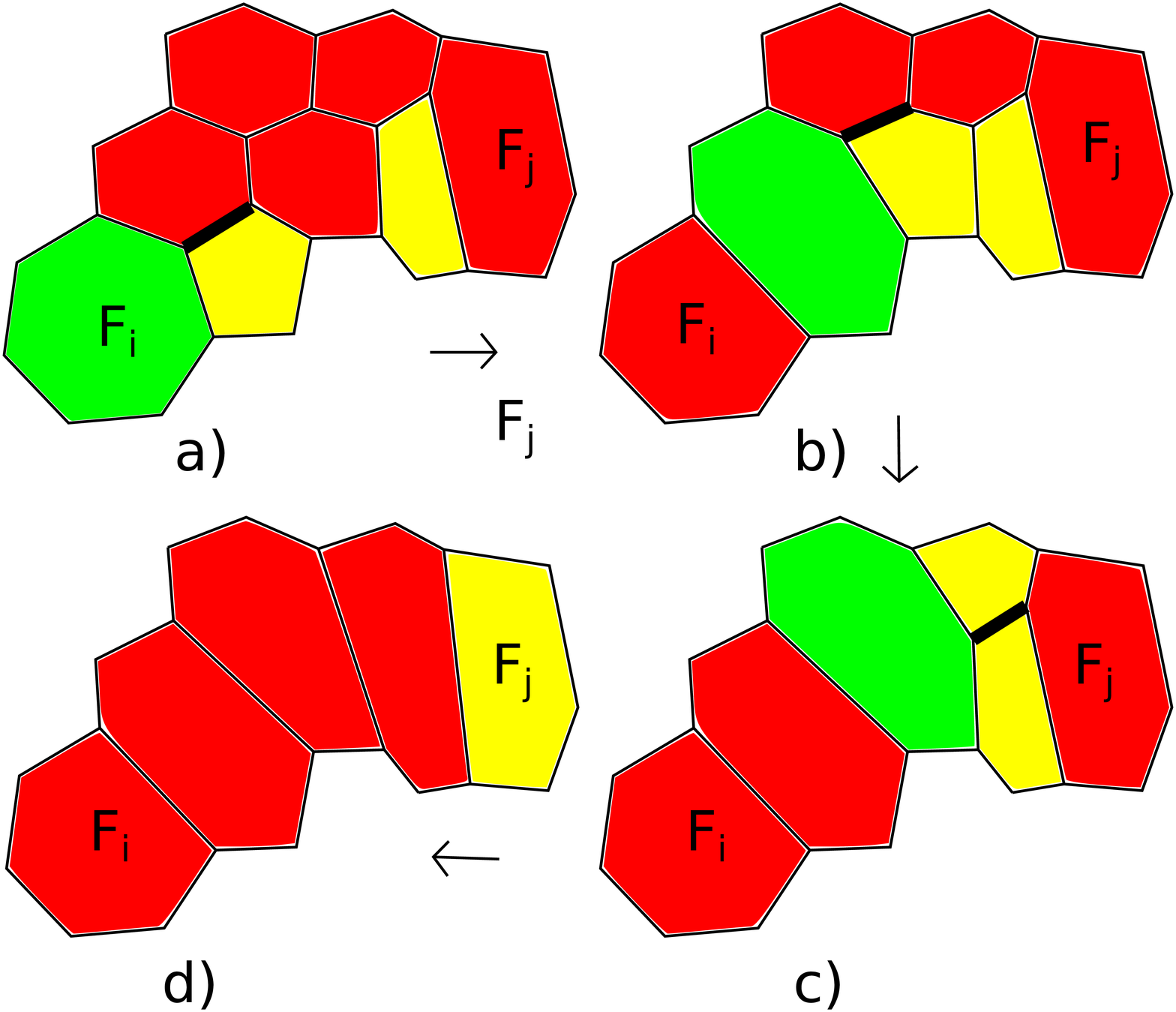}
\caption{a) A patch on the polytope in $\mathcal{F}_1^{IPR}$,  b)-d) it's reductions}
\label{fig:753}       
\end{figure}\qed
\end{proof}


%
%

\section{Compliance with Ethical Standards}
\begin{acknowledgements}
This work is supported by the Russian Science Foundation under grant 14-11-00414. The second author is the Young Russian Mathematics award winner. 
\end{acknowledgements}
{\small \noindent{\bf Conflict of Interest:} The authors declare that they have no conflict of interest.}



\end{document}